\title{Single site factors of Gibbs measures}
\author{Mark Piraino \thanks{Department of Mathematics and Statistics, University of Victoria.}}
\date{\today}
\theoremstyle{definition}
\newtheorem{theorem}{Theorem}[section]
\newtheorem{lemma}[theorem]{Lemma}
\newtheorem{example}[theorem]{Example}
\newtheorem{corollary}[theorem]{Corollary}
\newtheorem{proposition}[theorem]{Proposition}
\newtheorem{definition}[theorem]{Definition}
\newtheorem{procedure}[theorem]{Procedure}
\newtheorem{remark}{Remark}
\newtheorem*{acknowledgements}{Acknowledgements}
\newcommand{\bmat}[1]{\begin{bmatrix} #1 \end{bmatrix}}
\newcommand{\inn}[1]{\left\langle #1 \right\rangle}
\newcommand{\set}[1]{\left\{ #1 \right\}}
\newcommand{\abs}[1]{\left| #1 \right|}
\newcommand{\norm}[1]{\left \| #1 \right \|}
\newcommand{\ol}[1]{\overline{#1}}
\def\[#1\]{\begin{align*}#1\end{align*}}
\DeclareMathOperator{\spn}{span}
\DeclareMathOperator{\diam}{diam}
\DeclareMathOperator{\var}{var}
\newcommand{\N}{\mathbb{N}}
\newcommand{\R}{\mathbb{R}}
\def\A{\mathcal{A}}
\def\L{\mathcal{L}}      
\def\c{\mathcal{C}}    
\def\S{\Sigma}
\newcommand{\e}{\varepsilon}
\begin{document}

\maketitle

\begin{abstract}
	It has been an open problem to identify classes of Gibbs measures less regular then H\"older continuous on the full shift which are closed under factor maps. In this article we show that in fact all of the classical uniqueness regimes (Bowen, Walters, and H\"older) from thermodynamic formalism are closed under factor maps between full shifts. In fact we show more generally that the classical uniqueness regimes are closed under factors between shifts of finite type provided the factor map satisfies a suitable mixing in fibers condition.
\end{abstract}

\section{Introduction}
Hidden Markov measures are of great interest in many areas of science, both pure and applied. It is well known that a hidden Markov measure can fail to be Markov. Our goal here is to study a generalization of hidden Markov measures, single site factors of Gibbs measures. These measures have attracted a significant amount attention (\cite{MR2853610}, \cite{Yoo2010}, \cite{chazottes2003projection}, \cite{chazottes2009preservation}, \cite{johansson_öberg_pollicott_2017}, \cite{kempton2011factors}). Broadly speaking there are two main questions: are these measures Gibbs for some continuous potential (and what is its continuity rate) and what classes of measures are preserved by single site factors. We will focus on the second question.

Let us recall the definition of a single site factor map. Suppose that $\S$ and $\ol{\S}$ are two alphabets and $\pi:\S \to \ol{\S}$ and $\S_{A}^{+}\subseteq \S^{\N}$ the shift of finite type over $\S$ determined by the matrix $A$ we define a map from $\S_{A}^{+}\to Y \subseteq\ol{\S}^{\N}$ which we again call $\pi$ by $\pi[(x_{i})_{i=0}^{\infty}]= (\pi(x_{i}))_{i=0}^{\infty}$. This map is continuous and intertwines the shift maps. Thus given a shift invariant measure $\mu$ on $\S^{\N}$ we can define $\pi_{\ast}\mu$ on $\ol{\S}^{\N}$ as the pushforward of $\mu$ under $\pi$. That is, $\pi_{\ast}\mu(A) = \mu(\pi^{-1}A)$ for all $A$ Borel measurable. As $\pi$ interwines the shift maps it follows that $\pi_{\ast}\mu$ is shift invariant.

When the shift of finite type $\S_{A}^{+}$ is a full shift it is known that factors of Markov measures have H\"older continuous $g$ functions \cite{Yoo2010}. However when the shift $\S_{A}^{+}$ has excluded words this is no longer true see for instance \cite[example 4]{HolderGibbsStates} or \cite[example 4.2]{kempton2011factors}. However by imposing conditions on the factor map $\pi$ to ensure that the fibers of $\pi$ are ``topologically mixing'' in a certain sense the result can be recovered see for instance \cite{chazottes2003projection}, \cite{Yoo2010}. The goal of this paper is to prove the analogous results for Gibbs measures associated to more general potentials. 

\begin{definition}\label{defofCUR}
	Let $\varphi:\S_{A}^{+}\to \R$ be a function. Recall that
	\[ \var_{n}\varphi = \sup\set{\abs{\varphi(x)-\varphi(y)}:x_{i}=y_{i} \text{ for }0\leq i \leq n-1}. \]
	We say that $\varphi$ is \emph{H\"older} if there exists a constant $\abs{\varphi}_{\theta}$ and $0<\theta<1$ such that
	\[ \var_{n}\varphi \leq \abs{\varphi}_{\theta}\theta^{n} \]
	for all $n \geq 0$ (that is $\varphi$ is H\"older in the $2^{-n}$ metric). We say that $\varphi$ is \emph{Walters} if 
	\[ \sup_{n \geq 1}\var_{n+k}S_{n}\varphi\xrightarrow{k \to \infty}0 \]
	where $S_{n}\varphi(x)=\sum_{i=0}^{n-1}\varphi(\sigma^{i}x)$. We say that $\varphi$ is \emph{Bowen} if $\varphi$ is continuous and there exists a constant $K$ such that
	\[ \sup_{n\geq 1}\var_{n}S_{n}\varphi\leq K. \]
\end{definition}

We will refer to these as the classical uniqueness regimes. It can be seen that 
\[ \text{H\"older}\subset \text{Walters} \subset \text{Bowen}. \]

\begin{remark}
	Another common class of potentials are those potentials which have \emph{summable variations}. That is, potentials for which
	\[ \sum_{n=0}^{\infty}\var_{n}\varphi < \infty. \]
	It can be shown that 
	\[ \text{summable variations} \subset \text{Walters} \]
	and in some sense summable variations is simply a verifiable condition which implies the Walters property. 
\end{remark}

For all of these classes of potentials Gibbs measures exist and are unique. That is, if $\S_{A}^{+}$ is a topologically mixing shift of finite type and $\varphi:\S_{A}^{+}\to \R$ is Bowen then there exists a unique shift invariant measure $\mu_{\varphi}$ for which there exist constants $C>0$ and $P$ such that
\begin{equation}\label{eq:scalarGibbinequality}
C^{-1}\leq \frac{\mu_{\varphi}([x_{0}\cdots x_{n-1}])}{e^{-nP+S_{n}\varphi(x)}}\leq C 
\end{equation}
for all $x \in \Sigma_{A}^{+}$ and $n>0$. The constant $P$ is called the pressure and is alternatively characterized by the equation
\[ P=\sup\set{h_{\mu}(\sigma)+\int \varphi d \mu : \mu \text{ is shift invariant}}. \]
Moreover the measure $\mu_{\varphi}$ is the unique measure which achieves this supremum. The following definition appears in \cite{Yoo2010}.
\begin{definition}\label{fibermixingdef}
	We say that $\pi$ is fiber-wise sub-positive mixing if there exists an $N$ such that for any word $b_{0}\cdots b_{N}$ admissible in $Y$ and words $u_{0}\cdots u_{N}, w_{0}\cdots w_{N}$ such that $\pi(u_{0}\cdots u_{N})=\pi(w_{0}\cdots w_{N})=b_{0}\cdots b_{N}$ there exists a word $a_{0}\cdots a_{N}$ admissible in $\S_{A}^{+}$ with $\pi(a_{0}\cdots a_{N})=b_{0}\cdots b_{N}$ and $a_{0}=u_{0}$, $a_{N}=w_{N}$.
\end{definition}

The goal of this article is then to prove the following theorem.

\begin{theorem}\label{mainthm}
	Suppose that $\Sigma$ is a finite alphabet, $\Sigma_{A}^{+}\subseteq \S^{\N}$ is a topologically mixing shift of finite type, $\mu_{\varphi}$ a Gibbs measure for $\varphi$ and $\pi$ a fiber-wise sub-positive mixing factor map. If $\varphi$ is Bowen (respectively Walters, H\"older) then $\pi_{\ast}\mu_{\varphi}$ is the Gibbs state for a potential which is Bowen (respectively Walters, H\"older).
\end{theorem}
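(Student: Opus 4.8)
The plan is to realize $\nu := \pi_{\ast}\mu_{\varphi}$ as a $g$-measure on the topologically mixing (sofic) subshift $Y=\pi(\S_{A}^{+})$ and to read off the regularity of its $g$-function directly from a transfer operator adapted to the fibers of $\pi$. First I would normalize: by the Ruelle--Perron--Frobenius theorem (valid for Bowen potentials on a topologically mixing SFT) there is a positive continuous $h$ with $L_{\varphi}h=e^{P}h$, and setting $\wt\varphi=\varphi+\log h-\log h\circ\sigma-P$ gives $g:=e^{\wt\varphi}$ with $\sum_{\sigma y=x}g(y)=1$, the same Gibbs measure $\mu$, and a normalized operator $L:=L_{\wt\varphi}$ satisfying $L\mathbf 1=\mathbf 1$ and $L^{\ast}\mu=\mu$. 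Since $S_{n}(\log h-\log h\circ\sigma)=\log h-\log h\circ\sigma^{n}$ is uniformly bounded, one checks that $\var_{n}S_{n}\wt\varphi$ differs from $\var_{n}S_{n}\varphi$ by a bounded amount, that $\var_{n+k}S_{n}\wt\varphi$ and $\var_{n+k}S_{n}\varphi$ have the same asymptotics in $k$, and that $\log h$ is H\"older when $\varphi$ is; hence $\wt\varphi$ is Bowen (resp. Walters, H\"older) exactly when $\varphi$ is, and it suffices to work with $\wt\varphi$.

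Next I would introduce, for each $b\in\ol\S$, the fiber operator $L_{b}\Phi(x)=\sum_{\pi(a)=b,\ ax\ \text{admissible}}g(ax)\Phi(ax)$, so that $\sum_{b}L_{b}=L$. A short induction based on the duality $\int(\Phi\circ\sigma)\Psi\,d\mu=\int\Phi\,(L\Psi)\,d\mu$ yields the representation $\nu([b_{0}\cdots b_{n-1}])=\int L_{b_{n-1}}\cdots L_{b_{0}}\mathbf 1\,d\mu$. The candidate $g$-function is then $\bar g(y)=\lim_{n}\bar g_{n}(y)$ where, writing $b_{i}=y_{i}$, $\bar g_{n}(y)=\big(\int L_{b_{n-1}}\cdots L_{b_{1}}L_{b_{0}}\mathbf 1\,d\mu\big)\big/\big(\int L_{b_{n-1}}\cdots L_{b_{1}}\mathbf 1\,d\mu\big)$; note $\bar g_{n}(y)$ depends only on $y_{0},\dots,y_{n-1}$. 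The target potential is $\psi=\log\bar g$.

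The heart of the argument is a contraction estimate: the long products $L_{b_{n-1}}\cdots L_{b_{1}}$ act as strict contractions of the cone of positive functions in Hilbert's projective metric. This is precisely where fiber-wise sub-positive mixing enters. Definition~\ref{fibermixingdef} says that over any window of length $N+1$ every pair (front preimage, back preimage) of a given $Y$-word is joined by an admissible preimage path, which is exactly the statement that the block operator $L_{b_{N}}\cdots L_{b_{0}}$ has strictly positive ``entries'' relative to the partition by initial symbol. By Birkhoff's theorem each block contracts the metric by a uniform factor $\tau<1$, giving geometric memory loss at rate $\tau^{\floor{n/N}}$. This simultaneously shows that $\bar g_{n}(y)$ is Cauchy, so that $\bar g$ exists and is bounded away from $0$ and $\infty$, and, by telescoping the operator products, that $\nu$ satisfies the Gibbs inequality \eqref{eq:scalarGibbinequality} with respect to $\psi$, since $e^{S_{n}\psi(y)}=\lim_{m}\big(\int L_{b_{m-1}}\cdots L_{b_{0}}\mathbf 1\,d\mu\big)\big/\big(\int L_{b_{m-1}}\cdots L_{b_{n}}\mathbf 1\,d\mu\big)$ is comparable to $\nu([y_{0}\cdots y_{n-1}])$.

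Finally I would transfer regularity. To bound $\var_{n+k}S_{n}\psi$ (and $\var_{n}\psi$ in the H\"older case) I compare two points $y,y'$ agreeing through coordinate $n+k-1$ using the formula for $S_{n}\psi$ above; the difference splits into a contraction term bounded by $\tau^{\floor{k/N}}$ (always geometric, from the mixing) and a term measuring the dependence of the weights $g=e^{\wt\varphi}$ on coordinates beyond $n+k$, which is governed by the modulus of regularity of $\wt\varphi$. For H\"older $\wt\varphi$ both contributions decay geometrically, so $\psi$ is H\"older with a fixed power of the original exponent; for Walters one feeds in $\sup_{n}\var_{n+k}S_{n}\wt\varphi\to0$; for Bowen only the a priori bound $\var_{n}S_{n}\wt\varphi\le K$ is available. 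I expect the main obstacle to be exactly this last point: isolating the $\varphi$-contribution to the fiber sums so that it is controlled by the Birkhoff-sum variations $\var_{m}S_{m}\wt\varphi$ rather than by the single-coordinate variations $\var_{m}\wt\varphi$ (which need not be small in the Bowen class), while keeping the mixing contraction uniform across fibers. Once this estimate is established, $\psi$ is Bowen (resp. Walters, H\"older), and since Bowen potentials on the topologically mixing $Y$ admit a unique Gibbs state, $\nu=\pi_{\ast}\mu_{\varphi}$ is that state.
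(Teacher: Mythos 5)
There are two genuine gaps here, and they are precisely the points where the Bowen/Walters cases differ from the classical H\"older theory. First, your normalization step fails outside the Walters class: for a merely Bowen potential on a topologically mixing SFT the Ruelle--Perron--Frobenius eigenfunction $h$ with $L_{\varphi}h=e^{P}h$ is in general only \emph{measurable}, not continuous (the paper states exactly this when it recalls the construction $\mu_{\varphi}=h\,d\nu$). Consequently $\wt\varphi=\varphi+\log h-\log h\circ\sigma-P$ need not be continuous, $g=e^{\wt\varphi}$ need not be a $g$-function, and your claims that $\wt\varphi$ inherits the Bowen property cannot even be formulated. The paper avoids the eigenfunction entirely: it only uses the conformal measure $\nu$ with $L_{\varphi}^{\ast}\nu=\nu$ (which exists for Bowen potentials) and defines $\psi$ in \eqref{PsiDef} by pairing the unnormalized operator products against $\nu$; this costs nothing because the Gibbs property gives $e^{S_{n}\psi(y)}\approx\pi_{\ast}\nu[y_{0}\cdots y_{n-1}]\approx\pi_{\ast}\mu_{\varphi}[y_{0}\cdots y_{n-1}]$ up to uniform constants, and equality of $\mu_{\psi}$ and $\pi_{\ast}\mu_{\varphi}$ then follows from mutual absolute continuity plus ergodicity (Proposition \ref{BowenProp}).

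Second, and more seriously, your contraction mechanism does not work as stated. You apply Birkhoff's theorem to fixed-length blocks $L_{b_{N}}\cdots L_{b_{0}}$ acting on the cone of \emph{all} positive functions, claiming a uniform factor $\tau<1$ and geometric memory loss $\tau^{\floor{n/N}}$. But the image of the full positive cone under a finite block has infinite projective diameter: if $f$ oscillates wildly at fine scales then so does $L_{b_{N}}\cdots L_{b_{0}}f$ within each cylinder, so Birkhoff's theorem yields no contraction there. Fiber-wise sub-positive mixing controls positivity across the partition by initial symbol (this is Lemma \ref{M}), but one also needs control of log-oscillation \emph{within} cylinders, i.e.\ regularity cones $\mathcal{C}([a],d)$; and for a Bowen or Walters potential no single metric $d$ and no fixed block length $N$ give an invariant cone with uniform contraction, because $\var_{m+k}S_{m}\varphi$ can decay arbitrarily slowly in $k$. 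You flag this obstacle yourself at the end, but it is not a technicality to be patched --- it is the main content of the paper's argument. The resolution is Procedure \ref{coneconstructionprocedure}: a sequence of metrics $d_{j}$ and \emph{growing} block lengths $n_{j}$ (with $n_{j}\to\infty$ unless $\varphi$ is H\"older) chosen so that $n_{j+1}$-blocks map $\mathcal{C}([a],d_{j})$ into $\mathcal{C}([b],\sigma d_{j+1})$ (Lemma \ref{IndividualConeMapping}); combining this with Lemma \ref{M} via Corollary \ref{ThetaBoundRegularityCones} gives the uniform diameter bound $D$ and hence a uniform Birkhoff factor $\gamma=\tanh(D/4)$ \emph{per block} (Lemma \ref{ContractionLemma}), which yields subexponential but sufficient memory loss: one only needs $\sup_{n}\var_{n+k}S_{n}\psi\to 0$ (Walters) or uniform boundedness (Bowen), not a geometric rate. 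Your proposal is essentially correct only in the H\"older case, where a single cone and fixed $N$ do suffice; for the two larger classes, the sequence-of-cones construction is the missing idea.
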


The proof also yields the following.
\begin{corollary}\label{maincor}
	Suppose that $\Sigma$ is a finite alphabet, $\Sigma_{A}^{+}\subseteq \S^{\N}$ is a topologically mixing shift of finite type, $\mu_{g}$ a $g$-measure for $g$ and $\pi$ a fiber-wise sub-positive mixing factor map. If $\log g$ is Bowen (respectively Walters, H\"older) then the logarithm of the $g$-function for $\pi_{\ast}\mu_{g}$ is Bowen (respectively Walters, H\"older).
\end{corollary}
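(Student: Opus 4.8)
The plan is to realize a $g$-measure as a normalized Gibbs measure and then feed it through Theorem \ref{mainthm}. Recall that $\mu_{g}$ is the $g$-measure for $g$ exactly when $\mu_{g}$ is the equilibrium state of the potential $\log g$ in its normalized form, meaning $\sum_{\sigma y = x} g(y) = 1$; equivalently the transfer operator $\L_{\log g}$ fixes the constant function $1$, the pressure is $0$, and the Ruelle--Perron--Frobenius eigenfunction is constant. In particular, if $\log g$ is Bowen (respectively Walters, H\"older) then $\log g$ is a legitimate potential of the corresponding class, $\mu_{g}=\mu_{\log g}$, and Theorem \ref{mainthm} applies directly: $\pi_{\ast}\mu_{g}$ is the Gibbs state of a potential $\psi$ on $Y$ which is Bowen (respectively Walters, H\"older).

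It remains to pass from this (a priori unnormalized) potential $\psi$ to the actual $g$-function of $\nu:=\pi_{\ast}\mu_{g}$ and to check that this passage respects the three regularity classes. Let $h>0$ be the RPF eigenfunction of $\L_{\psi}$ and $P=P(\psi)$ the pressure. Then the normalized potential
\[ \log\ol{g}=\psi-P+\log h-\log(h\circ\sigma) \]
satisfies $\L_{\log\ol g}1=1$, so $\ol g$ is precisely the $g$-function of $\nu$. Since subtracting the constant $P$ is harmless, the whole point reduces to showing that adding the coboundary $u-u\circ\sigma$, where $u:=\log h$, preserves each class.

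First I would record the regularity available for $u$. For a topologically mixing shift and a Bowen potential the eigenfunction $h$ is continuous and bounded away from $0$ and $\infty$, so $u$ is bounded and continuous; for a H\"older potential $h$, and hence $u$, is H\"older. Telescoping gives $S_{n}\log\ol g=S_{n}\psi+u-u\circ\sigma^{n}-nP$, and the elementary estimates $\var_{n}(u\circ\sigma^{n})\leq 2\norm{u}_{\infty}$ and $\var_{n+k}(u\circ\sigma^{n})\leq\var_{k}u$ are the only facts needed. They yield that $\sup_{n\geq1}\var_{n}S_{n}\log\ol g$ stays bounded (Bowen), that $\sup_{n\geq1}\var_{n+k}S_{n}\log\ol g\to0$ as $k\to\infty$ (Walters, using in addition $\var_{k}u\to0$), and, in the H\"older case, that $\var_{n}\log\ol g\leq C\theta^{n}$.

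The delicate point, and the one I would treat most carefully, is the Bowen case: here $u=\log h$ carries no modulus of continuity beyond mere uniform continuity and boundedness, so the argument must use only $\norm{u}_{\infty}$ and the fact that $\var_{n}u$ is uniformly bounded, never any decay rate. That the coboundary estimate survives on this slender input is exactly what keeps the Bowen class---rather than some artificially stronger class---closed under the normalization, and it is the step I expect to require the most care. Finally I would note the shortcut implicit in the phrase ``the proof also yields'': if the proof of Theorem \ref{mainthm} is arranged so as to construct the conditional probabilities of $\pi_{\ast}\mu_{\varphi}$ directly, then for $\varphi=\log g$ the output is already the normalized $g$-function $\ol g$ of $\nu$, and the regularity bounds established there give the corollary with no separate normalization step.
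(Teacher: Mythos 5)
Your closing remark is, in fact, the paper's entire proof of this corollary, and it is the only part of your proposal you need. Because $g$ is normalized ($L_{\log g}1=1$, pressure $0$, eigenfunction constant), the eigenmeasure $\nu$ with $L_{\log g}^{\ast}\nu=\nu$ that the paper uses to define $\psi$ in \eqref{PsiDef} is $\mu_{g}$ itself. Hence the approximants in Proposition \ref{BowenProp} read $f_{m}(y)=\log\bigl(\pi_{\ast}\mu_{g}[y_{0}\cdots y_{m}]/\pi_{\ast}\mu_{g}[y_{1}\cdots y_{m}]\bigr)$, so $\psi$ is \emph{already} the logarithm of the $g$-function of $\pi_{\ast}\mu_{g}$, with no renormalization step, and the regularity of $\psi$ established in the proof of Theorem \ref{mainthm} is verbatim the corollary. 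This is exactly the content of the remark following Proposition \ref{BowenProp}, and it is why the paper emphasizes that the freedom in the choice of $\nu$ (as opposed to a point mass, as in earlier work) is what yields Corollary \ref{maincor}.

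Your primary route --- apply Theorem \ref{mainthm} to get $\psi$, then normalize via $\log\ol{g}=\psi-P+\log h-\log(h\circ\sigma)$ --- has a genuine gap in the Bowen branch. You assert that for a Bowen potential the eigenfunction $h$ is continuous, but Bowen's condition only guarantees a bounded \emph{measurable} eigenfunction; the paper says precisely this (``a measurable function $h>0$ such that $L_{\varphi}h=\rho(L_{\varphi})h$''), and the full Ruelle operator theorem with continuous eigenfunction can fail in the Bowen class. Without continuity of $h$, your $\ol{g}$ need not be continuous, and then $\log\ol{g}$ cannot be Bowen in the sense of Definition \ref{defofCUR}, which requires continuity; your telescoping estimates correctly control $\var_{n}S_{n}\log\ol{g}$ and $\var_{n+k}S_{n}\log\ol{g}$ from $\norm{u}_{\infty}$ and $\var_{k}u$, but no variation bound can manufacture the missing continuity. (The Walters and H\"older branches of your normalization do go through, since Walters' Ruelle operator theorem supplies a continuous, respectively H\"older, eigenfunction there --- but this imports RPF machinery on $Y$ that the paper's argument never needs.) So the correct fix is the one you already sketched: discard the normalization and observe that with $\nu=\mu_{g}$ the potential produced by the proof of Theorem \ref{mainthm} is the normalized $g$-function itself.
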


In \cite{MR2853610} the problem of finding a class of g measures more general then H\"older continuous which is preserved under single site transformations on the full shift over finite many symbols was posed. In addition it was conjectured that the class of g measures with square summable log variations might be such a class, however it was shown in \cite{johansson_öberg_pollicott_2017} that this is not the case. Theorem \ref{mainthm} can be seen as a strong answer to this question, which holds beyond the special case of factor maps between full shifts.

Recall Gibbs measures are typically constructed using the eigendata of a Ruelle operator. Recall that given a continuous function $\varphi:X \to \R$ (which is often called a \emph{potential}) we define the \emph{Ruelle operator} (sometimes referred to as the \emph{transfer operator}) for $\varphi$, $L_{\varphi}:C(\S_{A}^{+})\to C(\S_{A}^{+})$, by
\[ L_{\varphi}f(x)=\sum_{\sigma y =x}e^{\varphi(y)}f(y). \]
If $\varphi$ is Bowen then there exists a unique measure $\nu$ such that $L_{\varphi}^{\ast}\nu = \rho(L_{\varphi}) \nu$ and a measurable function $h>0$ such that $L_{\varphi}h = \rho(L_{\varphi})h$, where $\rho(L_{\varphi})$ is the spectral radius of $L_{\varphi}$. The Gibbs measure is then constructed as $\mu_{\varphi}=hd\nu$. For background on Gibbs measures we refer the reader to Bowen's book \cite{bowen1975equilibrium} as well as \cite{MR1783787} for potentials less regular then H\"older. Let's consider an example of a hidden Markov measure.

\begin{example}\label{HMMexample}
	Let $\mu_{S}$ be the Markov measure on $\set{0,1,2}^{\N}$ defined by the stochastic matrix
	\[ S=\bmat{1/3&1/3&1/3 \\ 1/3 & 0 &2/3 \\ 1/6&1/6&2/3 }. \]
	Again suppose that the states $0$ and $1$ are labeled red and $2$ is labeled blue. That is $\pi:\set{0,1,2}^{\N}\to \set{r,b}^{\N}$ is the map induced by the function $\pi(0)=\pi(1)=r$ and $\pi(2)=b$. Let $\nu$ be the left eigenvector for $S$. Recall that the measure $\mu_{S}$ is the Gibbs state for the potential $\varphi(x)=\log S_{x_{0}x_{1}}$. The transfer operator for $\varphi$ preserves the subspace $\spn\set{\chi_{[a]}:a \in \S}$ and the matrix representation of $L_{\varphi}$ in the standard basis is given by the transpose of $S$ that is
	\[ L_{\varphi}=\bmat{1/3&1/3&1/6 \\ 1/3 & 0 &1/6 \\ 1/3&2/3&2/3 }. \]
	It is well known that there is a formula for the $\pi_{\ast}\mu_{S}$ measure of cylinder sets in terms of the matrices
	\begin{gather*}
	\L_{rr}= \bmat{1/3& 1/3&0 \\ 1/3 & 0 &0 \\ 0&0&0} \text{ , }\L_{rb}=\bmat{ 0& 0&1/3 \\ 0 & 0 &2/3 \\ 0 & 0 & 0 }\\
	\L_{br}= \bmat{ 0& 0&0 \\ 0 & 0 & 0 \\ 1/6&1/6&0 } \text{ , } \L_{bb}=\bmat{ 0& 0&0 \\ 0 & 0 &0 \\ 0& 0 &2/3 }.
	\end{gather*}
	That is we can write
	\[ \pi_{\ast}\mu_{S}[rrbr]=[1]\L_{rb}\L_{br}\L_{rr}\nu^{T}. \]
	where $[1]$ is the row vector of all $1$'s and $\nu$ is the left eigenvector for $S$ normalized so that $\sum_{i}\nu_{i}=1$. In this example all of the products of the matrices $\L_{ij}$ of length $2$ map one cone strictly inside another. They are thus strict contractions of the Hilbert projective metric and this implies $\pi_{\ast}\mu_{S}$ is the Gibbs state for a H\"older continuous potential, see \cite{Yoo2010} for details.
\end{example}

This example demonstrates two things. First the presence of excluded words (or more accurately the structure of the fibers $\pi^{-1}(y)$) has a significant impact on the regularity properties of the $g$ function for hidden Markov measures. Second it suggests a method for extending results beyond hidden Markov measures where the matrices $\L_{rr}$, $\L_{rb}$, $\L_{br}$ and $\L_{rr}$ are replaced by suitable operators on acting on subspaces of $C(\S_{A}^{+})$.

\section{The classical uniqueness regimes}\label{sec:CUR}

It is known from \cite{Yoo2010} that factors of Markov measures under a fiber-wise sub-positive mixing map have H\"older continuous $g$ functions. We have seen that factors of H\"older Gibbs states on full shifts have H\"older continuous $g$ functions. The goal of this section is to show that in fact all of the classical uniqueness regimes from definition \ref{defofCUR} are closed under fiber-wise sub-positive mixing factor maps. In particular in this section we will prove theorem \ref{mainthm}. Let $L_{\varphi}$ be the Ruelle operator associated to $\varphi$ and assume without loss that $\rho(L_{\varphi})=1$, in other words the pressure of $\varphi$  is $0$. Take $\nu$ such that $\L^{\ast}\nu=\nu$. Assume that $\varphi$ is Bowen.

Let $\pi:\S \to \ol{\S}$ be a map inducing a $1$-block factor map $\pi:\S_{A}^{+}\to Y \subseteq \ol{\S}^{\N}$ and assume that $\pi$ is fiber-wise sub-positive mixing. For each $i,j \in \S$ with $ji$ admissible in $\S_{A}^{+}$ define the operator
\[ L_{ij}f(x) = e^{\varphi(jx)}f(jx)\chi_{[i]}(x). \]
For example in the case of the Markov measure $\mu_{S}$ from example \ref{HMMexample} $L_{00}$ is simply the matrix
\[ \bmat{1/3 & 0  &0  \\ 0 & 0 & 0  \\ 0 & 0 &0 }. \]
As $\mu_{S}$ is the Gibbs state for the potential $\varphi(x) =\log S_{x_{0}x_{1}}$. For $bb'$ admissible in $Y$ define an operator $\L_{bb'}:C(\S_{A}^{+})\to C(\S_{A}^{+})$ by
\[ \L_{bb'}f:=\sum_{\pi(i)=b,\pi(j)=b'}L_{ij}f. \]
In the case of example \ref{HMMexample} the operator $\L_{br}=L_{20}+L_{21}$ is the matrix
\[ \L_{br}=\bmat{ 0 & 0 &0 \\ 0 & 0 & 0 \\ 1/3 & 2/3 & 0 }. \]
Given a word $w=w_{0}\cdots w_{n}$ admissible in $\S_{A}^{+}$ define
\[ L_{w}:= L_{w_{n}w_{n-1}}\cdots L_{w_{2}w_{1}}L_{w_{1}w_{0}} \]
and notice that
\begin{align*}
L_{w_{2}w_{1}}L_{w_{1}w_{0}}f(x)&=e^{\varphi(w_{0}x)}L_{w_{1}w_{0}}f(w_{1}x)\chi_{[w_{2}]}(x)\\
&=e^{\varphi(w_{0}x)}\left(e^{\varphi(w_{0}w_{1}x)}f(w_{0}w_{1}x)\chi_{[w_{1}]}(w_{1}x)\right)\chi_{[w_{2}]}(x)\\
&= e^{S_{2}\varphi(w_{0}w_{1}x)}f(w_{0}w_{1}x)\chi_{[w_{2}]}(x).
\end{align*}
Thus by iteration
\[ L_{w}f(x)=e^{S_{n}\varphi(w_{0}\cdots w_{n-1}x)}f(w_{0}\cdots w_{n-1}x)\chi_{[w_{n}]}(x). \]
Similarly for $\ol{w}=\ol{w}_{0}\cdots \ol{w}_{n}$ admissible in $Y$ define 
\[ \L_{\ol{w}} := \L_{\ol{w}_{n}\ol{w}_{n-1}}\cdots \L_{\ol{w}_{2}\ol{w}_{1}}\L_{\ol{w}_{1}\ol{w}_{0}}. \]

We will make use of a strategy of using sequences of cones to obtain sub exponential rates of convergence that has been used previously in the decay of correlations literature \cite{kondah1997vitesse}. Give a metric $d$ and a symbol $a\in \S$ define a cone
\[ \c([a],d)=\set{f \in C([a]):f\geq 0 \text{ and }f(x)\leq f(x')e^{d(x,x')}\text{ for all }x,x' \in [a]} \] 
and given a number $B>0$ define a cone
\[ \c([a],+B)=\set{f\in C([a]):f\geq 0\text{ and }f(x)\leq e^{B}f(x')\text{ for all }x,x'\in [a]}. \]
We now produce a sequence of metrics for which long products of the operators $L_{ij}$ map $\c([a],d_{j})$ to $\c([b],\sigma d_{j+1})$ for some $0<\sigma <1$. This is crucial to bounding the projective diameter (see corollary \ref{ThetaBoundRegularityCones}).

\begin{procedure}\label{coneconstructionprocedure}
	This procedure has two inputs. A function $\varphi$ with the Bowen property (which is fixed throughout this section) and a sequence $\alpha_{k}$ which is positive and decreasing to $0$ (we will make different choices for $\alpha_{k}$ depending on our needs). What it produces is a sequence of natural numbers $\set{n_{j}}_{j=0}^{\infty}$ and a sequence of metrics $\set{d_{j}}_{j=0}^{\infty}$ with specific properties. Construct a sequence $\set{n_{j}}_{j=0}^{\infty}$ of numbers and metrics $\set{d_{j}}_{j=0}^{\infty}$ in following way. Set
	\[ \alpha(0,k)=\alpha_{k} \text{ and }\alpha(m,k)=\var_{m+k}S_{m}\varphi. \]
	Fix some $0<\sigma <1$. Inductively define a sequence $\set{n_{i}}_{i=1}^{\infty}$: take $n_{1}$ such that for all $n \geq n_{1}$ we have $\alpha(0,n)\leq \frac{\sigma}{2}$. As $\alpha_{k}$ converges to $0$ and $S_{n_{1}}\varphi$ is continuous we may take $n_{2}$ such that for all $n \geq n_{2}$ we have that
	\[ \alpha(0, n_{1}+n)\leq \frac{\sigma^{2}}{2^{2}} \text{ and }\alpha(n_{1}, n)\leq \frac{\sigma }{2}. \]
	Continue in this way, that is given $\set{n_{i}}_{i=1}^{j-1}$ choose $n_{j}$ such that for all $n \geq n_{j}$ we have 
	\begin{gather*}
	\alpha(0, n_{1}+n_{2}+\cdots +n_{j-1}+n)\leq \frac{\sigma^{j}}{2^{j}},\\
	\alpha(n_{1}, n_{2}+n_{3}+\cdots +n_{j-1}+n)\leq \frac{\sigma^{j-1}}{2^{j-1}} ,\\
	\alpha(n_{2},n_{3}+\cdots +n_{j-1}+n)\leq \frac{\sigma^{j-2}}{2^{j-2}},\\
	\vdots \\
	\alpha(n_{i}, n_{i+1}+n_{i+2}+\cdots +n_{j-1}+n)\leq \frac{\sigma^{j-i}}{2^{j-i}}\\
	\vdots \\
	\alpha(n_{j-1},n)\leq \frac{\sigma}{2}
	\end{gather*}
	Define 
	\begin{equation}
	d_{j,k}=\sum_{i=0}^{j}\frac{\alpha(n_{i},n_{i+1}+n_{i+2}+\cdots +n_{j}+k)}{\sigma^{j-i+1}} 
	\end{equation}
	where $n_{0}:=0$. Notice that for each $j$, $d_{j,k}\xrightarrow{k \to \infty}0$ thus we can then define a sequence of metrics $\set{d_{j}}_{j=0}^{\infty}$ by
	\begin{equation}\label{eq:sequenceofmetric}
	d_{j}(x,y)=d_{j,k(x,y)} 
	\end{equation}
	where $k(x,y)=\min\set{i:x_{i}\neq y_{i}}$. Notice that $d_{0,k}=\alpha_{k}$.
\end{procedure}

\begin{remark}
	Unless $\varphi$ is H\"older $n_{j} \xrightarrow{j \to \infty}\infty$. In the case of a H\"older potential we can have that $n_{j}=1$ and $d_{j,k}=\frac{\abs{\varphi}_{\theta}\theta^{k+1}}{\sigma -\theta}$ for all $j$.
\end{remark}

What we have gained from this construction of $\set{n_{i}}_{i=0}^{\infty}$ and $d_{j,k}$ is the following. First, 
\begin{align*}
\alpha(n_{j+1},k)+d_{j,n_{j+1}+k}&=\alpha(n_{j+1},k)+\sum_{i=0}^{j}\frac{\alpha(n_{i},n_{i+1}+n_{i+2}+\cdots +n_{j}+n_{j+1}+k)}{\sigma^{j-i+1}}\\
&= \sigma \left(\frac{\alpha(n_{j+1},k)}{\sigma}+\sum_{i=0}^{j}\frac{\alpha(n_{i},n_{i+1}+n_{i+2}+\cdots +n_{j}+n_{j+1}+k)}{\sigma^{j-i+2}}\right)\\
&= \sigma \left(\sum_{i=0}^{j+1}\frac{\alpha(n_{i},n_{i+1}+n_{i+2}+\cdots +n_{j}+n_{j+1}+k)}{\sigma^{(j+1)-i+1}}\right)\\
&= \sigma d_{j+1,k}.
\end{align*}
This implies that $n_{j+1}$ products of the operators $L_{ij}$ map a cone of the type $\c([a],d_{j})$ into a cone of the type $\c([b],\sigma d_{j+1})$ which is the content of lemma \ref{IndividualConeMapping}. Second, we have that for $k \geq 0$
\[ d_{j,k}=\sum_{i=0}^{j}\frac{\alpha(n_{i},n_{i+1}+n_{i+2}+\cdots +n_{j-1}+n_{j}+k)}{\sigma^{j-i+1}}\leq \frac{1}{\sigma}\sum_{i=0}^{j-1}\frac{1}{2^{j-i}}+\frac{\alpha(n_{j},k)}{\sigma}\leq \frac{2+K}{\sigma}. \]
Where $K$ is the constant from the Bowen property for $\varphi$, this implies that for all $j$ and $a$ we have that $\c([a],d_{j})\subseteq \c([a],+B)$ where $B=\frac{2+K}{\sigma}$.

\begin{lemma}\label{IndividualConeMapping}
	For any $j$ and word $w=w_{0}\cdots w_{n_{j+1}}$ we have that 
	\[ L_{w}:\c([w_{0}],d_{j})\to \c([w_{n_{j+1}}], \sigma d_{j+1}).\]
\end{lemma}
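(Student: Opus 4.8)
The plan is to verify directly that an arbitrary $f \in \c([w_0], d_j)$ gets mapped by $L_w$ into $\c([w_{n_{j+1}}], \sigma d_{j+1})$. Recall from the computation already carried out in the text that the full product acts as
\[ L_w f(x) = e^{S_{n_{j+1}}\varphi(w_0 \cdots w_{n_{j+1}-1} x)} f(w_0 \cdots w_{n_{j+1}-1} x) \chi_{[w_{n_{j+1}}]}(x), \]
where $n = n_{j+1}$ in the statement. Since $f \geq 0$ and the exponential factor is positive, $L_w f \geq 0$ and is supported on $[w_{n_{j+1}}]$, so the nonnegativity and support requirements of the target cone hold automatically. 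The substance of the proof is the Lipschitz-type bound: I must show that for any $x, x' \in [w_{n_{j+1}}]$,
\[ L_w f(x) \leq L_w f(x') \, e^{\sigma d_{j+1}(x,x')}. \]

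First I would fix $x, x'$ agreeing in their first $k = k(x,x')$ coordinates and abbreviate $z = w_0 \cdots w_{n_{j+1}-1} x$ and $z' = w_0 \cdots w_{n_{j+1}-1} x'$. Taking the ratio of $L_w f(x)$ to $L_w f(x')$ splits into two factors: the cocycle ratio $e^{S_{n_{j+1}}\varphi(z) - S_{n_{j+1}}\varphi(z')}$ and the function ratio $f(z)/f(z')$. For the first factor I would note that $z$ and $z'$ share the prefix $w_0 \cdots w_{n_{j+1}-1}$ and then agree for a further $k$ coordinates, so they agree in their first $n_{j+1} + k$ coordinates; hence by definition of variation, $S_{n_{j+1}}\varphi(z) - S_{n_{j+1}}\varphi(z') \leq \var_{n_{j+1}+k} S_{n_{j+1}}\varphi = \alpha(n_{j+1}, k)$. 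For the second factor, $z$ and $z'$ both lie in $[w_0]$ and differ first at coordinate $n_{j+1} + k$, so since $f \in \c([w_0], d_j)$ we get $f(z)/f(z') \leq e^{d_j(z,z')} = e^{d_{j,\, n_{j+1}+k}}$.

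Combining the two estimates yields the exponent bound $\alpha(n_{j+1}, k) + d_{j,\, n_{j+1}+k}$, which is exactly the quantity computed in the paragraph preceding the lemma to equal $\sigma d_{j+1,k} = \sigma d_{j+1}(x,x')$. This is the crux: the whole point of the inductive choice of the $n_j$ and the definition of $d_{j,k}$ is to make this algebraic identity hold, so once the two variation estimates are in place the conclusion falls out by substitution. The step I expect to require the most care is bookkeeping the coordinate-agreement count correctly — ensuring that $z, z'$ agree in precisely $n_{j+1} + k$ coordinates so that the right index $\alpha(n_{j+1}, k)$ appears rather than an off-by-$n_{j+1}$ version, and confirming that $d_j$ evaluated on $z, z'$ reads off $d_{j,\, n_{j+1}+k}$ and not $d_{j,k}$. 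No genuine obstacle remains beyond this indexing, since the analytic content is entirely absorbed into the precomputed identity for $\sigma d_{j+1,k}$.
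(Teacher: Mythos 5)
Your proposal is correct and follows essentially the same route as the paper's proof: the same splitting of the ratio $L_{w}f(x)/L_{w}f(x')$ into the cocycle factor, bounded by $\alpha(n_{j+1},k)$ since $z,z'$ agree in their first $n_{j+1}+k$ coordinates, and the function factor, bounded by $e^{d_{j,\,n_{j+1}+k}}$ from $f\in\c([w_{0}],d_{j})$, then concluded via the precomputed identity $\alpha(n_{j+1},k)+d_{j,\,n_{j+1}+k}=\sigma d_{j+1,k}$. Your indexing bookkeeping is exactly right (and in fact cleaner than the paper's, which contains a typo writing $f\in\c([w_{n_{j+1}}],d_{j})$ where the domain cone $\c([w_{0}],d_{j})$ is meant).
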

\begin{proof}
	Suppose that $f \in \c([w_{0}],d_{j})$. For $x,x' \in [w_{n_{j+1}}]$ with $x_{i}=x_{i}'$ for all $0\leq i \leq k-1$. We have by definition that 
	\[ \abs{S_{n_{j+1}}\varphi(w_{0}\cdots w_{n_{j+1}-1}x)-S_{n_{j+1}}\varphi(w_{0}\cdots w_{n_{j+1}-1}x')}\leq \alpha(n_{j+1},k)  \]
	and
	\begin{align*}
	\frac{f(w_{0}\cdots w_{n_{j+1}-1}x)}{f(w_{0}\cdots w_{n_{j+1}-1}x')}&\leq \exp \left[d_{j}(w_{0}\cdots w_{n_{j+1}-1}x,w_{0}\cdots w_{n_{j+1}-1}x')\right]\\
	&=\exp \left[d_{j,n_{j+1}+k} \right] 
	\end{align*}
	because $f \in \c([w_{n_{j+1}}],d_{j})$ and $w_{0}\cdots w_{n_{j+1}-1}x, w_{0}\cdots w_{n_{j+1}-1}x'$ agree for $n_{j+1}+k$ places. Therefore
	\begin{align*}
	&L_{w}f(x)\\
	&=e^{S_{n_{j+1}}\varphi(w_{0}\cdots w_{n_{j+1}-1}x)}f(w_{0}\cdots w_{n_{j+1}-1}x)\\
	&=e^{S_{n_{j+1}}\varphi(w_{0}\cdots w_{n_{j+1}-1}x)-S_{n_{j+1}}\varphi(w_{0}\cdots w_{n_{j+1}-1}x')+S_{n_{j+1}}\varphi(w_{0}\cdots w_{n_{j+1}-1}x')}\\
	&\indent \times \frac{f(w_{0}\cdots w_{n_{j+1}-1}x)}{f(w_{0}\cdots w_{n_{j+1}-1}x')}f(w_{0}\cdots w_{n_{j+1}-1}x')\\
	&\leq \exp \left[ \alpha(n_{j+1},k)+d_{j,n_{j+1}+k} \right]L_{w}f(x')\\
	&=e^{\sigma d_{j+1}(x,x')}L_{w}f(x')
	\end{align*}
	Hence $L_{w}f \in \c([w_{n_{j+1}}], \sigma d_{j+1})$.
\end{proof}

The strategy of the proof will be the following. Define $\psi:Y \to \R$ by
\begin{equation}\label{PsiDef}
\psi(y)=\lim_{m \to \infty} \log \frac{ \inn{\L_{y_{m}y_{m-1}}\cdots \L_{y_{2}y_{1}}\L_{y_{1}y_{0}}1, \nu}}{\inn{\L_{y_{m}y_{m-1}}\cdots \L_{y_{2}y_{1}}1, \nu}}. 
\end{equation}
Recall that if $\psi$ is Bowen then it has a unique Gibbs state $\mu_{\psi}$ which satisfies the Gibbs inequality 
\[ C^{-1}\leq \frac{\mu_{\psi}[y_{0}\cdots y_{n-1}]}{e^{S_{n}\psi(y)}}\leq C \]
(that the pressure is $0$ is a consequence of the assumption that $\rho(L_{\varphi})=1$). A computation (Proposition \ref{BowenProp}) shows that 
\[ e^{S_{n}\psi(y)}\approx \pi_{\ast}\nu[y_{0}\cdots y_{n-1}]\approx \pi_{\ast}\mu_{\varphi}[y_{0}\cdots y_{n-1}]. \]
Thus $\mu_{\psi}$ and $\pi_{\ast}\mu_{\varphi}$ are mutually absolutely continuous and therefore equal as they are ergodic. $\pi_{\ast}\mu_{\varphi}$ is then the Gibbs state for $\psi$ and theorem \ref{mainthm} will follow by proving that $\psi$ is H\"older, Walters, or Bowen respectively.

This proof strategy is similar to \cite{kempton2011factors} and \cite{pollicott2011factors} with the observation that the choice of the measure $\nu$ is arbitrary (although it also allows us to obtain corollary \ref{maincor}). The notable difference being that we appeal to cone-theoretic techniques while \cite{kempton2011factors} and \cite{pollicott2011factors} are based on ``hands on'' bounds. That is, the function defined in \cite{kempton2011factors} and \cite{pollicott2011factors} is essentially the same as \eqref{PsiDef} with $\nu$ replaced by a point mass. The real benefit to our method is that we can prove that classes are closed under factor maps. This is in contrast to \cite{kempton2011factors} and \cite{pollicott2011factors} where the main application is continuity rates.

We must now construct a sequence of cones. To do so we identify $C(\S_{\A})$ with $\bigoplus_{a \in \S}C([a])$ and define cones as in the proposition \ref{directsumofcones}. Given a word $\ol{w}=\ol{w}_{0}\cdots \ol{w}_{n}$ admissible in $Y$ and a metric $d$, define cones 
\[ 
\c(\text{in}, \ol{w}, d)= \bigoplus_{a:\exists w, \pi(w)=\ol{w}, w_{0}=a}\c([a],d) \]
and
\[ \c(\text{out}, \ol{w}, d)=\bigoplus_{a:\exists w, \pi(w)=\ol{w}, w_{n}=a}\c([a],d). \]
We will give names to the sets 
\[ \ol{w}_{\text{in}}= \set{a:\exists w, \pi(w)=\ol{w}, w_{0}=a}\subseteq \S \]
and 
\[ \ol{w}_{\text{out}}=\set{a:\exists w, \pi(w)=\ol{w}, w_{n}=a}\subseteq \S.\]
In addition we will call
\[ \c(\text{in},\ol{w},+)&=\bigoplus_{a \in \ol{w}_{\text{in}}}C([a],+)\\
&= \set{f \geq 0: f(x)=0 \text{ for all }x \notin \bigcup_{a \in \ol{w}_{\text{in}}}[a]} \]
and
\[ \c(\text{in},\ol{w},+B)&=\bigoplus_{a \in \ol{w}_{\text{in}}}C([a],+B)\\
&= \set{f \in \c(\text{in},\ol{w},+) : f(x)\leq e^{B}f(x') \text{ for all }x,x' \text{ with }x_{0}=x_{0}'} \]
and similarly for ``out''. To get a sense of these definitions observe that in example \ref{HMMexample} we have that
\[ \c(\text{in}, br , +)=\set{\bmat{a_{0} \\ a_{1}\\ 0}:a_{0},a_{1}\geq 0} \]
and 
\[ \c(\text{out}, br , +)=\set{\bmat{0 \\ 0 \\ a_{2}}:a_{2}\geq 0}. \]
Recall that because the potential in example \ref{HMMexample} depends only on $2$ coordinates we can think of these cones as the non-negative quadrant of $\spn\set{\chi_{[a]}: a \in \S}\cong \R^{\abs{\S}}$. In general the cones $\c(\text{in}, \ol{w}, d), \c(\text{out}, \ol{w}, d)$ are simply non-negative functions supported on the cylinder sets corresponding to $\ol{w}_{\text{in}}$ and $\ol{w}_{\text{out}}$ for which the restriction to any cylinder set $[a]$ is in the cone $\c([a],d)$.

\begin{lemma}
	Suppose that $\ol{w}=\ol{w}_{0}\cdots \ol{w}_{n_{j+1}}$ is a word admissible in $Y$. We have that 
	\[\L_{\ol{w}}:\c(\text{in},\ol{w},d_{j})\to \c(\text{out},\ol{w}, \sigma d_{j+1}).\]
\end{lemma}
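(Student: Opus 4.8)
The plan is to reduce everything to Lemma~\ref{IndividualConeMapping} by first expanding $\L_{\ol{w}}$ as an honest sum of the single-index operators $L_{w}$. Writing out the definition $\L_{bb'}=\sum_{\pi(i)=b,\pi(j)=b'}L_{ij}$ and multiplying out the product $\L_{\ol{w}}=\L_{\ol{w}_{n_{j+1}}\ol{w}_{n_{j+1}-1}}\cdots \L_{\ol{w}_{1}\ol{w}_{0}}$, a typical term is a composition $L_{w_{n_{j+1}}w_{n_{j+1}-1}}\cdots L_{w_{1}w_{0}}$ indexed by a choice of symbols $w_{k}$ with $\pi(w_{k})=\ol{w}_{k}$. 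Since $L_{kl}g(x)=e^{\varphi(lx)}g(lx)\chi_{[k]}(x)$ produces a function supported on $[k]$ and reads its argument off $[l]$, such a composition is nonzero only when the inner and outer indices match at every stage and the resulting word $w=w_{0}\cdots w_{n_{j+1}}$ is admissible in $\S_{A}^{+}$; in that case the composition is exactly $L_{w}$. Thus
\[ \L_{\ol{w}}f=\sum_{\substack{w\text{ admissible}\\ \pi(w)=\ol{w}}}L_{w}f, \]
where the only bookkeeping is to confirm that each surviving composition genuinely equals $L_{w}$, which follows by the same iteration already carried out for $L_{w}$ in the text.

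Next I would track a single input $f\in \c(\text{in},\ol{w},d_{j})$ through one such term. By definition $f$ is supported on $\bigcup_{a\in\ol{w}_{\text{in}}}[a]$ with $f|_{[a]}\in\c([a],d_{j})$. For a word $w$ with $\pi(w)=\ol{w}$, the first symbol $w_{0}$ automatically lies in $\ol{w}_{\text{in}}$ (the word $w$ itself witnesses this), so $L_{w}f=L_{w}(f|_{[w_{0}]})$ depends only on the component $f|_{[w_{0}]}\in\c([w_{0}],d_{j})$. Lemma~\ref{IndividualConeMapping} then gives $L_{w}f\in\c([w_{n_{j+1}}],\sigma d_{j+1})$, a function supported on $[w_{n_{j+1}}]$ with $w_{n_{j+1}}\in\ol{w}_{\text{out}}$ (again witnessed by $w$).

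Finally I would reassemble. Grouping the sum by the terminal symbol $a=w_{n_{j+1}}\in\ol{w}_{\text{out}}$, the $[a]$-component of $\L_{\ol{w}}f$ is $\sum_{w:\pi(w)=\ol{w},\,w_{n_{j+1}}=a}L_{w}f$, a sum of functions each lying in $\c([a],\sigma d_{j+1})$. Since that cone is closed under addition --- if $g(x)\leq g(x')e^{\sigma d_{j+1}(x,x')}$ and $h(x)\leq h(x')e^{\sigma d_{j+1}(x,x')}$ then the same inequality holds for $g+h$ --- each component again lies in $\c([a],\sigma d_{j+1})$, and hence $\L_{\ol{w}}f\in \bigoplus_{a\in\ol{w}_{\text{out}}}\c([a],\sigma d_{j+1})=\c(\text{out},\ol{w},\sigma d_{j+1})$. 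The one genuinely delicate point is the expansion step: one must verify that the cross terms with mismatched or inadmissible indices vanish, so that no contribution escapes the support prescribed by $\ol{w}_{\text{out}}$. Everything after that is a direct application of Lemma~\ref{IndividualConeMapping} together with the (trivial) additive closure of the cones.
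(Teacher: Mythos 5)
Your proof is correct and takes essentially the same route as the paper: the paper's entire proof is the one-line observation that $\L_{\ol{w}}=\sum_{\pi(w)=\ol{w}}L_{w}$ combined with Lemma \ref{IndividualConeMapping}, which are exactly the two ingredients you use. Your write-up simply makes explicit the bookkeeping the paper leaves implicit --- that cross terms with mismatched or inadmissible indices vanish when the product is expanded, and that each cone $\c([a],\sigma d_{j+1})$ is closed under addition (immediate from positive homogeneity and convexity) --- so there is no genuine difference in approach.
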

\begin{proof}
	This is a consequence of the fact that $\L_{\ol{w}}=\sum_{\pi(w)=\ol{w}}L_{w}$ and lemma \ref{IndividualConeMapping}.
\end{proof}

\begin{lemma}\label{M}
	Let $N$ be as in definition \ref{fibermixingdef}. There exists a constant $C$ such that 
	\[ \Theta_{\c(\text{out}, \ol{w},+)}(\L_{\ol{w}}f,1)\leq C\]
	for all $f\in \c(\text{in}, \ol{w}, +B)$ and admissible words $\ol{w}=\ol{w}_{0}\cdots \ol{w}_{N}$.
\end{lemma}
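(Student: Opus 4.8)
The plan is to read $\Theta$ directly off the positive-cone structure and reduce the lemma to a single two-sided comparison. Write $U = \bigcup_{a \in \ol{w}_{\text{out}}}[a]$ for the common support of the cone $\c(\text{out}, \ol{w}, +)$, so that the comparison element $1$ is the indicator $\chi_{U}$ and, for any nonnegative $g$ supported on $U$, the defining formula for the projective metric on the positive cone gives $\Theta_{\c(\text{out}, \ol{w}, +)}(g, 1) = \log\bigl(\sup_{U} g \,/\, \inf_{U} g\bigr)$. It therefore suffices to produce a constant $C'$, independent of the word $\ol{w}$ and of $f \in \c(\text{in}, \ol{w}, +B)$, such that $\L_{\ol{w}}f(x) \le C'\, \L_{\ol{w}}f(x')$ for all $x, x' \in U$; taking the supremum over $x$ and the infimum over $x'$ then yields $\Theta \le \log C' =: C$.

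To establish this inequality I would compare the two defining sums term by term. Recall $\L_{\ol{w}}f(x) = \sum_{w} e^{S_{N}\varphi(\hat{w}x)}f(\hat{w}x)$, summed over admissible $w = w_{0}\cdots w_{N}$ with $\pi(w) = \ol{w}$ and $w_{N} = x_{0}$, where $\hat{w} := w_{0}\cdots w_{N-1}$. Fix one such $w$; if $f(\hat{w}x) = 0$ the term vanishes, so assume $f(\hat{w}x) > 0$, whence $f > 0$ on all of $[w_{0}]$ by the $+B$ property. Since $x_{0}' \in \ol{w}_{\text{out}}$ admits a preimage word of $\ol{w}$ ending at $x_{0}'$, and $w$ is itself a preimage word beginning at $w_{0}$, Definition \ref{fibermixingdef} (applied to the length-$(N+1)$ word $\ol{w}$) furnishes an admissible preimage word $v = v_{0}\cdots v_{N}$ of $\ol{w}$ with $v_{0} = w_{0}$ and $v_{N} = x_{0}'$, and its term $e^{S_{N}\varphi(\hat{v}x')}f(\hat{v}x')$ occurs in $\L_{\ol{w}}f(x')$. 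Because $\hat{w}x, \hat{v}x' \in [w_{0}]$, the $+B$ property gives $f(\hat{w}x) \le e^{B}f(\hat{v}x')$, and since $N$ is a fixed constant and $\norm{\varphi}_{\infty} =: V < \infty$ we have $e^{S_{N}\varphi(\hat{w}x)} \le e^{2NV}e^{S_{N}\varphi(\hat{v}x')}$. Hence each term of $\L_{\ol{w}}f(x)$ is at most $e^{2NV+B}\L_{\ol{w}}f(x')$, and summing the at most $\abs{\S}^{N}$ of them gives the comparison with $C' = \abs{\S}^{N}e^{2NV+B}$.

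It remains to guarantee that $\inf_{U}\L_{\ol{w}}f > 0$, so that $\Theta$ is genuinely finite, and this is exactly where fiber mixing does the essential work: for $x \in U$ choose a cylinder $[c]$ with $c \in \ol{w}_{\text{in}}$ on which $f$ is strictly positive (one exists since $f \neq 0$), and use Definition \ref{fibermixingdef} to build an admissible preimage word of $\ol{w}$ running from $c$ to $x_{0}$, whose contribution to $\L_{\ol{w}}f(x)$ is strictly positive. I expect this \emph{connectivity/positivity} step to be the crux of the argument: without the mixing-in-fibers hypothesis some output coordinate could receive no contribution from the cylinders supporting $f$, forcing the infimum to zero and the projective diameter to infinity. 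By contrast the two quantitative inputs — the $+B$ oscillation bound on $f$ and the crude $\norm{\varphi}_{\infty}$ estimate (which suffices precisely because $N$ is fixed, so that the full Bowen property is not even needed for this lemma) — are entirely routine.
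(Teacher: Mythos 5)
Your proof is correct and takes essentially the same route as the paper: fiber-wise sub-positive mixing supplies, for each output coordinate, an admissible preimage word of $\ol{w}$ reaching it from a cylinder where $f$ is positive, the $+B$ property controls the ratio of values of $f$ within that cylinder, and a crude $e^{N\norm{\varphi}_{\infty}}$-type bound on the $N$-step weights gives uniform two-sided bounds on $\L_{\ol{w}}f$ over its support, hence a bound on $\Theta_{\c(\text{out},\ol{w},+)}(\L_{\ol{w}}f,1)=\log\left(\sup \L_{\ol{w}}f/\inf \L_{\ol{w}}f\right)$. The only difference is bookkeeping: the paper normalizes at a point $z$ where $f$ attains $\norm{f}_{\infty}$ and bounds $\L_{\ol{w}}f$ above by $\norm{L_{\varphi}}_{\op}^{N}\norm{f}_{\infty}$, whereas you compare $\L_{\ol{w}}f(x)$ to $\L_{\ol{w}}f(x')$ term by term using at most $\abs{\S}^{N}$ preimage words (and you are slightly more careful in reading the comparison element $1$ as $\chi_{U}$), yielding a constant of the same form.
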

\begin{proof}
	Let $z \in \bigcup_{\pi(a)=w_{0}}[a]$ be such that $f(z)=\norm{f}_{\infty}$. By the assumption that $\pi$ is fiberwise sub-positive mixing we have that for any $x \in \bigcup_{\pi(a)=\ol{w}_{N}}[a]$ there exists a word $w$ with $\pi(w)=\ol{w}$ and $w_{0}=z_{0}, w_{N}=x_{0}$. Thus
	\begin{align*}
	\L_{\ol{w}}f(x)\geq L_{w}f(x)= e^{S_{N}\varphi(w_{0}\cdots w_{N-1}x_{0}\cdots)}f(w_{0}\cdots w_{N-1}x_{0}\cdots)\geq e^{-N\norm{\varphi}_{\infty}}\norm{f}_{\infty}e^{-B}
	\end{align*}
	on the other hand we have that
	\[ \L_{\ol{w}}f(x)\leq \norm{\L_{\ol{w}}}_{\text{op}}\norm{f}_{\infty}\leq \norm{L_{\varphi}}^{N}\norm{f}_{\infty}. \]
	Therefore
	\[ \Theta_{\c(\text{out}, \ol{w}, +)}(\L_{\ol{w}}f, 1)\leq \log\left(\frac{\norm{L_{\varphi}}_{\text{op}}^{N} }{e^{-B-N\norm{\varphi}_{\infty}}}\right). \]
\end{proof}

\begin{remark}
	Note by potentially taking $n_{j}$ slightly larger if necessary we can have that $n_{j}\geq N$ for all $j$.
\end{remark}

\begin{lemma}\label{ContractionLemma}
	\begin{enumerate}
		\item 
		There exists a constant $D$ such that for any $j$ and $\ol{w}=\ol{w}_{0}\cdots \ol{w}_{n_{j+1}}$ admissible in $Y$ we have that 
		\[ \diam_{\c(\text{out},\ol{w},d_{j+1})}(\L_{\ol{w}}\c(\text{in},\ol{w},d_{j}))\leq D. \]
		
		\item 
		Let $j \geq 0$, $k\geq 1$ and set $m=\sum_{i=1}^{k}n_{j+i}$. For any word $y_{0} \cdots y_{m}$ admissible in $Y$ we have that 
		\[ \Theta_{\c(\text{out}, y_{n_{j+1}}\cdots y_{m}, d_{j+2})}(\L_{y_{m}y_{m-1}}\cdots \L_{y_{1}y_{0}}f,\L_{y_{m}y_{m-1}}\cdots \L_{y_{1}y_{0}}g)\leq \gamma^{k}\Theta_{\c(\text{in},y_{0}\cdots y_{n_{j+1}},d_{j})}(f,g) \]
		where $\gamma:= \tanh(D/4)$.
	\end{enumerate}
\end{lemma}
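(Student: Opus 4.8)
The plan is to prove part (1) first, since part (2) will follow from it by iterating Birkhoff's theorem. For part (1) the key point is that $\L_{\ol{w}}$ is controlled in two complementary ways that together force its image to sit deep inside $\c(\text{out},\ol{w},d_{j+1})$. The in/out version of Lemma \ref{IndividualConeMapping} (the unlabelled lemma just above Lemma \ref{M}) gives $\L_{\ol{w}}\c(\text{in},\ol{w},d_{j})\subseteq\c(\text{out},\ol{w},\sigma d_{j+1})$; because $\sigma<1$, every image function satisfies the $d_{j+1}$-variation constraint with a definite amount of slack, and it is this slack that keeps the projective diameter finite. Separately I would establish a positivity estimate $\Theta_{\c(\text{out},\ol{w},+)}(\L_{\ol{w}}f,1)\le C$, \emph{uniform} in $j$ and in the admissible word $\ol{w}$, for every $f\in\c(\text{in},\ol{w},d_{j})$. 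Feeding the metric slack together with this positivity bound into corollary \ref{ThetaBoundRegularityCones} converts them into the desired diameter bound $D$ in $\c(\text{out},\ol{w},d_{j+1})$, which is the content of part (1).

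The main obstacle is that $C$ must be independent of $j$, while $\ol{w}$ has length $n_{j+1}\to\infty$, whereas Lemma \ref{M} only furnishes uniform positivity across words of length $N$. I would resolve this by peeling off the last $N$ coordinates: write $\L_{\ol{w}}=\L_{\ol{w}''}\L_{\ol{w}'}$, with $\ol{w}''$ the terminal block of length $N$ and $\ol{w}'$ the rest, the two overlapping in a single symbol so that the factorization is exact. The bound $\c(\text{in},\ol{w},d_{j})\subseteq\c(\text{in},\ol{w},+B)$ with $B=(2+K)/\sigma$ (recorded before Lemma \ref{IndividualConeMapping}) shows $\L_{\ol{w}'}f\in\c(\cdot,\sigma d_{j+1})\subseteq\c(\cdot,+B)$. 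Then an upper bound $\L_{\ol{w}}f(x)\le\norm{L_{\varphi}}_{\op}^{N}\norm{\L_{\ol{w}'}f}_{\infty}$, together with a lower bound $\L_{\ol{w}}f(x)\ge e^{-N\norm{\varphi}_{\infty}-B}\norm{\L_{\ol{w}'}f}_{\infty}$ built exactly as in Lemma \ref{M} from a single fiber-mixing preimage of $\ol{w}''$ and the $+B$ control of $\L_{\ol{w}'}f$, gives, after the common factor $\norm{\L_{\ol{w}'}f}_{\infty}$ cancels, a sup/inf ratio bounded independently of $j$. The delicate bookkeeping is the support matching: one must know that the first symbol at which $\L_{\ol{w}'}f$ is maximal can be joined through $\ol{w}''$ to each prescribed terminal symbol in $\ol{w}_{\text{out}}$, and this is precisely what fiber-wise sub-positive mixing (definition \ref{fibermixingdef}), applied to the length-$N$ word $\ol{w}''$, guarantees.

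For part (2) I would set $m=\sum_{i=1}^{k}n_{j+i}$ and factor $\L_{y_{m}y_{m-1}}\cdots\L_{y_{1}y_{0}}$ into $k$ consecutive blocks, the $i$-th being the operator of the subword $y_{n_{j+1}+\cdots+n_{j+i-1}}\cdots y_{n_{j+1}+\cdots+n_{j+i}}$ of length $n_{j+i}$; since consecutive subwords overlap in a single symbol, the identity $\L_{\ol{u}\ol{v}}=\L_{\ol{v}}\L_{\ol{u}}$ makes this an exact composition. Applying part (1) with $j$ replaced by $j+i-1$, block $i$ sends $\c(\text{in},\cdot,d_{j+i-1})$ to an image of $\Theta$-diameter at most $D$ inside $\c(\text{out},\cdot,d_{j+i})$, so by Birkhoff's theorem it contracts the Hilbert metric by the factor $\tanh(D/4)=\gamma$. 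To chain the blocks I would use the metric inclusion $\c(\cdot,\sigma d_{j+i})\subseteq\c(\cdot,d_{j+i})$ together with the inclusion of the out-set of block $i$ into the in-set of block $i+1$, so that the image of block $i$ is a legitimate input to block $i+1$; the monotonicity $\Theta_{\c_{2}}\le\Theta_{\c_{1}}$ for $\c_{1}\subseteq\c_{2}$ then transports the contracted metric from one block's out-cone to the next block's in-cone without loss. Multiplying the $k$ contraction factors yields the asserted inequality, with the final image lying in the out-cone named on the left-hand side of the statement.
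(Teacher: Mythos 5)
Your overall route coincides with the paper's: part (1) is the image-in-$\c(\text{out},\ol{w},\sigma d_{j+1})$ statement fed into corollary \ref{ThetaBoundRegularityCones} together with the uniform positivity bound of lemma \ref{M}, and part (2) is the one-symbol-overlap block factorization with one application of theorem \ref{BirkhoffContraction} per block. Your peeling of a terminal block of length $N$ in part (1) is a correct patch of a step the paper leaves implicit: lemma \ref{M} is stated only for words of length exactly $N$, and the naive extension to length $n_{j+1}$ would degrade like $e^{-n_{j+1}\norm{\varphi}_{\infty}}$; the paper performs this same peeling only later, in the Bowen part of the proof of proposition \ref{BowenProp}. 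One local repair: your intermediate claim $\L_{\ol{w}'}f\in\c(\cdot,\sigma d_{j+1})$ is unjustified, since lemma \ref{IndividualConeMapping} is calibrated to blocks of length exactly $n_{j+1}$ and says nothing about the truncated word $\ol{w}'$. What your argument actually needs is only $\L_{\ol{w}'}f\in\c(\text{out},\ol{w}',+(K+B))$, which follows directly: for $x,x'$ in a common out-cylinder the Birkhoff-sum factor varies by at most $\var_{\ell+1}S_{\ell}\varphi\leq K$ (Bowen property, $\ell$ the length of $\ol{w}'$) and the $f$-factor by at most $d_{j,\ell+1}\leq B$, and lemma \ref{M}'s proof runs verbatim with $B$ replaced by $K+B$.

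The genuine gap is in your chaining step in part (2). The claimed inclusion of the out-set of block $i$ into the in-set of block $i+1$ is false in general: $\ol{w}^{i}_{\text{out}}$ consists of symbols over the junction letter $y_{n_{j+1}+\cdots+n_{j+i}}$ that \emph{terminate} some preimage of $\ol{w}^{i}$, while $\ol{w}^{i+1}_{\text{in}}$ consists of symbols over the same letter that \emph{begin} some preimage of $\ol{w}^{i+1}$, and in a shift of finite type with excluded words neither condition implies the other. Consequently $\L_{\ol{w}^{i}}f$ may be strictly positive on cylinders $[a]$ with $a\notin\ol{w}^{i+1}_{\text{in}}$; such a function is not an element of $\c(\text{in},\ol{w}^{i+1},d_{j+i})$ at all, so it is not ``a legitimate input to block $i+1$,'' and there is no pair of nested cones to which your monotonicity $\Theta_{\c_{2}}\leq\Theta_{\c_{1}}$ can be applied. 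The paper closes exactly this hole with the restriction maps: $R_{\ol{w}^{i+1}_{\text{in}}}$ is linear and maps $\c(\text{out},\ol{w}^{i},d_{j+i})$ into $\c(\text{in},\ol{w}^{i+1},d_{j+i})$ (it discards the offending components), hence does not increase the Hilbert metric by theorem \ref{BirkhoffContraction} with contraction coefficient at most $1$; and since $f-R_{\ol{w}^{i+1}_{\text{in}}}f$ is supported on cylinders from which no preimage of $\ol{w}^{i+1}$ departs, it lies in $\ker\L_{\ol{w}^{i+1}}$, whence $\L_{\ol{w}^{i+1}}=\L_{\ol{w}^{i+1}}R_{\ol{w}^{i+1}_{\text{in}}}$ on these inputs; this is precisely the diagram \eqref{restrictionconecontraction}. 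Had your inclusion been true, the paper's $R$ maps would be superfluous. With this substitution (and the part (1) repair above) your argument matches the paper's proof.
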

\begin{proof}
	\begin{enumerate}
		\item 
		Suppose that $f,g \in \c(\text{in},\ol{w},d_{j})$ as $\L_{\ol{w}}f,\L_{\ol{w}}g \in \c(\text{out}, \ol{w}, \sigma d_{j+1})$ we have by corollary \ref{ThetaBoundRegularityCones} that 
		\begin{align*}
		\Theta_{\c(\text{out}, \ol{w}, d_{j+1})}(\L_{\ol{w}}f,\L_{\ol{w}}g) &\leq 2 \log \frac{1+\sigma}{1-\sigma}+\Theta_{\c(\text{out}, \ol{w}, +)}(\L_{\ol{w}}f,\L_{\ol{w}}g)\\
		&\leq 2 \log \frac{1+\sigma}{1-\sigma}+2C 
		\end{align*}
		where $C$ is as in lemma \ref{M}.
		
		\item 
		We will show the second assertion when $k=2$ the general case for $k \geq 1$ is similar. Set $\ol{w}^{1}=y_{0}\cdots y_{n_{j+1}}$ and $\ol{w}^{2}=y_{n_{j+1}}\cdots y_{m}$. Notice that
		\[ \L_{y_{m}y_{m-1}}\cdots \L_{y_{1}y_{0}}= \L_{\ol{w}^{2}}\L_{\ol{w}^{1}}. \]
		Define a map $R_{\ol{w}^{2}_{\text{in}}}:\c(\text{out}, \ol{w}^{1}, d_{j+1}) \to \c(\text{in}, \ol{w}^{2},  d_{j+1})$ by 
		\[R_{\ol{w}^{2}_{\text{in}}}f(x)=\sum_{a \in \ol{w}^{2}_{\text{in}}}\chi_{[a]}(x)f(x).\]
		Notice that for any $f \in C(\S_{A})$, $f-R_{\ol{w}^{2}_{\text{in}}}f\in \ker \L_{\ol{w}^{2}}$ because is it supported on cylinder sets for which no word projecting to $\ol{w}^{2}$ begins ($R_{\ol{w}^{2}_{\text{in}}}$ is nothing but a restriction map). Thus $\L_{\ol{w}^{2}}f = \L_{\ol{w}^{2}}R_{\ol{w}^{2}_{\text{in}}}f$. To get a sense of what is happening consider the following diagram
		\begin{align}\label{restrictionconecontraction}
		\c(\text{in}, \ol{w}^{1}, d_{j}) \xrightarrow{\L_{\ol{w}^{1}}}\c(\text{out}, \ol{w}^{1}, d_{j+1}) \xrightarrow{R_{\ol{w}^{2}_{\text{in}}}}\c(\text{in}, \ol{w}^{2},  d_{j+1})\xrightarrow{\L_{\ol{w}^{2}}}\c(\text{out}, \ol{w}^{2},  d_{j+2})
		\end{align}
		Hence
		\begin{align*}
		\Theta_{\c(\text{out}, \ol{w}^{2}, d_{j+2})}(\L_{\ol{w}^{2}}\L_{\ol{w}^{1}}f,\L_{\ol{w}^{2}}\L_{\ol{w}^{1}}g)&=\Theta_{\c(\text{out}, \ol{w}^{2}, d_{j+2})}(\L_{\ol{w}^{2}}R_{\ol{w}^{2}_{\text{in}}}\L_{\ol{w}^{1}}f,\L_{\ol{w}^{2}}R_{\ol{w}^{2}_{\text{in}}}\L_{\ol{w}^{1}}g)\\
		&\leq \gamma \Theta_{\c(\text{in}, \ol{w}^{2}, d_{j+1})}(R_{\ol{w}^{2}_{\text{in}}}\L_{\ol{w}^{1}}f,R_{\ol{w}^{2}_{\text{in}}}\L_{\ol{w}^{1}}g)\\
		&\leq \gamma \Theta_{\c(\text{out}, \ol{w}^{1}, d_{j+1})}(\L_{\ol{w}^{1}}f,\L_{\ol{w}^{1}}g)\\
		&\leq \gamma^{2}\Theta_{\c(\text{in},\ol{w}^{1},d_{j})}(f,g) 
		\end{align*}
		by theorem \ref{BirkhoffContraction}.
	\end{enumerate}
\end{proof}

\begin{proposition}\label{BowenProp}
	Suppose that $\varphi$ is Bowen.
	\begin{enumerate}
		\item 
		The sequence of functions $\set{f_{m}}\subseteq C(Y)$ defined by
		\[ f_{m}(y):=\log \frac{ \inn{\L_{y_{m}y_{m-1}}\cdots \L_{y_{2}y_{1}}\L_{y_{1}y_{0}}1, \nu}}{\inn{\L_{y_{m}y_{m-1}}\cdots \L_{y_{2}y_{1}}1, \nu}}= \log \frac{\pi_{\ast}\nu [y_{0}y_{1}\cdots y_{m}]}{\pi_{\ast}\nu [y_{1}\cdots y_{m}]} \]
		converges uniformly to a function $\psi:Y \to \R$. Moreover $\psi$ is Bowen.
		
		\item 
		If $\mu_{\psi}$ is the unique Gibbs states for $\psi$ then there exist constants $C_{1},C_{2}>0$ such that
		\[ C_{1}\leq \frac{\mu_{\psi}[y_{0}\cdots y_{n-1}]}{\pi_{\ast}\mu_{\varphi}[y_{0}\cdots y_{n-1}]}\leq C_{2}. \]
		for all $y$.
		
		\item 
		$\mu_{\psi}=\pi_{\ast}\mu_{\varphi}$.
	\end{enumerate}
\end{proposition}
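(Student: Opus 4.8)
The plan is to read $\psi$ as the uniform limit of the $f_m$, control that limit with the cone contraction of Lemma \ref{ContractionLemma}, and transport the resulting regularity to $\pi_{\ast}\mu_{\varphi}$ via the Gibbs inequality. The engine behind every estimate is an elementary Lipschitz property that I would isolate first: if $R$ is any of the positive operators $\L_{\ol{w}}$, and $w,w'$ lie in a common cone of positive functions with Hilbert distance $\Theta(w,w')\leq \delta$, then writing $\alpha w'\leq w\leq \beta w'$ with $\log(\beta/\alpha)\leq \delta$, applying $R$ (which preserves the pointwise order), and pairing against the positive measure $\nu$ gives
\[ \abs{\log\tfrac{\inn{Rw,\nu}}{\inn{w,\nu}}-\log\tfrac{\inn{Rw',\nu}}{\inn{w',\nu}}}\leq \log(\beta/\alpha)\leq \delta. \]
That is, the log-ratio functional $w\mapsto \log\inn{Rw,\nu}-\log\inn{w,\nu}$ is $1$-Lipschitz for the Hilbert metric, and since passing to a smaller cone only increases $\Theta$, I may always estimate $\Theta$ in whichever cone is convenient.

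For (1), write $v_{m}=\L_{y_{m}y_{m-1}}\cdots \L_{y_{1}y_{0}}1$ and $u_{m}=\L_{y_{m}y_{m-1}}\cdots \L_{y_{2}y_{1}}1$, so that $f_{m}(y)=\log\inn{v_{m},\nu}-\log\inn{u_{m},\nu}$. Both $v_{m}$ and $u_{m}$ are images of the fixed cone elements $\L_{y_{1}y_{0}}1$ and $1$ under the same product $\L_{y_{m}\cdots y_{1}}$, and for $m'>m$ one has $v_{m'}=Rv_{m}$, $u_{m'}=Ru_{m}$ with $R=\L_{y_{m'}\cdots y_{m+1}}$. The Lipschitz bound applied to this $R$ gives $\abs{f_{m'}(y)-f_{m}(y)}\leq \Theta(v_{m},u_{m})$, and Lemma \ref{ContractionLemma}(2) bounds the right side by $\gamma^{\kappa}\Delta_{0}$, where $\kappa$ is the number of complete blocks $n_{1},\dots,n_{j}$ fitting inside $y_{1}\cdots y_{m}$ and $\Delta_{0}$ is the (finite) in-cone distance between $\L_{y_{1}y_{0}}1$ and $1$. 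Since the block lengths are finite, $\kappa\to\infty$ with $m$, so $\set{f_{m}}$ is uniformly Cauchy and $\norm{f_{m}-\psi}_{\infty}\leq \Delta_{0}\gamma^{\kappa}$; this is the uniform convergence. The same device proves continuity: if $y,y'$ agree on coordinates $0,\dots,k-1$ then $f_{k-1}(y)=f_{k-1}(y')$ because $f_{k-1}$ depends only on those coordinates, so $\var_{k}\psi\leq 2\norm{f_{k-1}-\psi}_{\infty}\leq 2\Delta_{0}\gamma^{\kappa(k-1)}$.

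For the Bowen property I would bound $S_{n}\psi$ against $\pi_{\ast}\nu$. Telescoping the formula for $f$ gives $S_{n}\psi(y)=\lim_{m}\log\frac{\pi_{\ast}\nu[y_{0}\cdots y_{m}]}{\pi_{\ast}\nu[y_{n}\cdots y_{m}]}$, while $\log\pi_{\ast}\nu[y_{0}\cdots y_{n-1}]=\sum_{i=0}^{n-1}f_{n-1-i}(\sigma^{i}y)$ depends only on $y_{0},\dots,y_{n-1}$. Splitting the long product at the junction $y_{n-1}y_{n}$, the difference $S_{n}\psi(y)-\log\pi_{\ast}\nu[y_{0}\cdots y_{n-1}]$ equals $\lim_{m}\log\frac{\inn{\L_{y_{n}\cdots y_{m}}\ol{w},\nu}}{\inn{\L_{y_{n}\cdots y_{m}}1,\nu}}$ with $\ol{w}=\L_{y_{n}y_{n-1}}a_{0}/\inn{a_{0},\nu}$ and $a_{0}=\L_{y_{0}\cdots y_{n-1}}1$. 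By Lemma \ref{ContractionLemma}(1) the function $a_{0}$ lies in an image of bounded diameter $D$, so $\sup a_{0}/\inf a_{0}\leq e^{D}$ on its support; as $\nu$ charges every cylinder positively, this pins $\ol{w}$ between positive constants independent of $n,y$ (fiber-wise sub-positive mixing is what guarantees $\ol{w}$ does not vanish on the support that $\L_{y_{n}\cdots y_{m}}$ sees). Since $\inf \ol{w}\cdot \L_{y_{n}\cdots y_{m}}1\leq \L_{y_{n}\cdots y_{m}}\ol{w}\leq \sup \ol{w}\cdot \L_{y_{n}\cdots y_{m}}1$ pointwise, the ratio is trapped between $\log\inf\ol{w}$ and $\log\sup\ol{w}$, giving $\abs{S_{n}\psi(y)-\log\pi_{\ast}\nu[y_{0}\cdots y_{n-1}]}\leq C_{3}$ uniformly. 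As the subtracted term is constant on $\var_{n}$-equivalent points, $\var_{n}S_{n}\psi\leq 2C_{3}$, so $\psi$ is Bowen; the analogous Walters and H\"older conclusions come from feeding the matching $\alpha_{k}$ into Procedure \ref{coneconstructionprocedure} so that $\gamma^{\kappa}$ decays at the required rate. I expect this uniform estimate to be the main obstacle: it is exactly here that the Hilbert-diameter bound of Lemma \ref{ContractionLemma}(1) must be converted into genuine sup/inf control of $\ol{w}$, the normalization $\inn{a_{0},\nu}$ tracked, and fiber mixing invoked to keep the supports nonempty so no denominator degenerates.

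Parts (2) and (3) then follow quickly. The estimate above gives $e^{S_{n}\psi(y)}\asymp \pi_{\ast}\nu[y_{0}\cdots y_{n-1}]$; combined with the Gibbs inequality for $\mu_{\psi}$ and with $\pi_{\ast}\mu_{\varphi}\asymp \pi_{\ast}\nu$ (since $\mu_{\varphi}=h\,d\nu$ with $h$ bounded away from $0$ and $\infty$ for a Bowen potential, the two push-forwards are comparable on every cylinder) this yields $C_{1}\leq \mu_{\psi}[y_{0}\cdots y_{n-1}]/\pi_{\ast}\mu_{\varphi}[y_{0}\cdots y_{n-1}]\leq C_{2}$, which is (2). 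In particular $\mu_{\psi}$ and $\pi_{\ast}\mu_{\varphi}$ are mutually absolutely continuous; both are ergodic ($\mu_{\psi}$ as the equilibrium state of $\psi$, and $\pi_{\ast}\mu_{\varphi}$ as a factor of the ergodic $\mu_{\varphi}$), and two mutually absolutely continuous ergodic measures coincide, giving (3).
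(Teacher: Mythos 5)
Your skeleton matches the paper's (cone contraction for uniform convergence of the $f_{m}$; a two-sided comparison $e^{S_{n}\psi(y)}\asymp \pi_{\ast}\mu_{\varphi}[y_{0}\cdots y_{n-1}]$ feeding the Gibbs inequality for (2); mutual absolute continuity plus ergodicity for (3)), and your $1$-Lipschitz log-ratio functional is a sound substitute for the paper's appeal to Proposition \ref{DualconeMetricComputation}. Deducing the Bowen property of $\psi$ from the comparison with $\log \pi_{\ast}\nu[y_{0}\cdots y_{n-1}]$, which depends only on the first $n$ coordinates, is a legitimate and arguably slicker variant of what the paper does. But there is a genuine gap, and it appears twice, both times from the same source. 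In part (1) you bound $\abs{f_{m'}(y)-f_{m}(y)}\leq \gamma^{\kappa}\Delta_{0}$ with $\Delta_{0}$ ``the (finite) in-cone distance between $\L_{y_{1}y_{0}}1$ and $1$.'' That distance need not be finite: in the direct-sum cone $\c(\text{in},\ol{w}^{1},d_{0})$ two elements are comparable only if they vanish on the same cylinders, and $\L_{y_{1}y_{0}}1$ can vanish on $[a]$ for some $a \in \ol{w}^{1}_{\text{in}}$, since a symbol may begin a preimage of $y_{1}\cdots y_{n_{1}+1}$ while having no admissible predecessor projecting to $y_{0}$. Concretely, take $\S=\set{1,2,3}$ with only the transition $1\to 3$ forbidden, $\pi(1)=r$, $\pi(2)=\pi(3)=b$: this is topologically mixing and fiber-wise sub-positive mixing (for every $N$), yet for $y_{0}=r$, $y_{1}=b$ the function $\L_{y_{1}y_{0}}1$ vanishes on $[3]$ while $3\in\ol{w}^{1}_{\text{in}}$, so $\Delta_{0}=\infty$ and your Cauchy estimate is vacuous. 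The paper avoids the initial distance entirely: it pushes both functions through the first block and uses lemma \ref{ContractionLemma}(1) --- ultimately lemma \ref{M}, where fiber mixing transports the maximum --- to get a \emph{diameter} bound $D$ on the image cone, and only then contracts, yielding $\gamma^{k-1}D$ independently of the starting pair. Your proof needs this replacement.

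The same support mismatch defeats the Bowen step as written. The sandwich $\inf \ol{w}\cdot \L_{\mathrm{fwd}}1\leq \L_{\mathrm{fwd}}\ol{w}\leq \sup\ol{w}\cdot \L_{\mathrm{fwd}}1$ has content only if $\ol{w}$ is bounded below on the cylinders the forward product charges, and your parenthetical claim that fiber-wise sub-positive mixing guarantees this is false: definition \ref{fibermixingdef} prescribes the first and last symbols of preimages of a fixed word; it does not force every symbol which begins a preimage of the forward word $y_{n}\cdots y_{m}$ to also end a preimage of the backward word. In the example above, with $y_{n-1}=r$, $y_{n}=b$, the function $\ol{w}=\L_{y_{n}y_{n-1}}a_{0}/\inn{a_{0},\nu}$ vanishes identically on $[3]$ although $3$ begins forward preimages, so $\inf\ol{w}=0$ on the relevant support and only your upper trap survives. (Relatedly, the projective diameter bound of lemma \ref{ContractionLemma}(1) does not by itself give $\sup a_{0}/\inf a_{0}\leq e^{D}$; one needs a reference element, and what the machinery actually supplies is $\Theta_{\c(\text{out},\cdot,+)}(\L_{\ol{w}}f,1)\leq C$, i.e.\ lemma \ref{M}.) The correct mechanism, which is how the paper proceeds, is to compare \emph{images} rather than inputs: push through one further block of length $N$, use fiber mixing to carry the maximum to every out-symbol, and bound the distance to $1$ over the out-support; equivalently, the paper obtains the two-sided comparison in part (2) from the uniform estimate $e^{-K}C_{1}\mu_{\varphi}([I])\leq \int e^{S_{n}\varphi(Iz)}d\eta(z)\leq C_{2}\mu_{\varphi}([I])$, valid for every probability measure $\eta$. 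Once the comparison is secured that way, your parts (2) and (3) are correct and coincide with the paper's argument.
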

\begin{proof}
	\begin{enumerate}
		\item 
		Let $\e>0$. Let $\alpha_{k}=\var_{k}\varphi$, set $d_{j}$ to be the metrics as in equation \eqref{eq:sequenceofmetric} and notice that this implies that $L_{ij}1 \in \c([i],d_{0})$. Let $D$ and $\gamma$ be as in lemma \ref{ContractionLemma}. Take $N=1+\sum_{i=1}^{k}n_{i}$ where $k$ is chosen such that $\gamma^{k-1}D<\e$. Suppose that $y \in Y$ and that $n,m \geq N$. Then
		\begin{align*}
		&\abs{f_{n}(y)-f_{m}(y)}\\
		&=\abs{\log \frac{ \inn{\L_{y_{n}y_{n-1}}\cdots \L_{y_{2}y_{1}}\L_{y_{1}y_{0}}1, \nu}}{\inn{\L_{y_{n}y_{n-1}}\cdots \L_{y_{2}y_{1}}1, \nu}}-\log \frac{ \inn{\L_{y_{m}y_{m-1}}\cdots \L_{y_{2}y_{1}}\L_{y_{1}y_{0}}1, \nu}}{\inn{\L_{y_{m}y_{m-1}}\cdots \L_{y_{2}y_{1}}1, \nu}}}\\
		&=\abs{\log \frac{ \inn{\L_{y_{N}y_{N-1}}\cdots \L_{y_{2}y_{1}}\L_{y_{1}y_{0}}1, \nu_{y,n}}\inn{\L_{y_{N}y_{N-1}}\cdots \L_{y_{2}y_{1}}1, \nu_{y,m}}}{\inn{\L_{y_{N}y_{N-1}}\cdots \L_{y_{2}y_{1}}1, \nu_{y,n}}\inn{\L_{y_{N}y_{N-1}}\cdots \L_{y_{2}y_{1}}\L_{y_{1}y_{0}}1, \nu_{y,m}}}}
		\end{align*}
		where
		\[ \nu_{y,n}= \L_{y_{N+1}y_{N}}^{\ast}\cdots \L_{y_{n}y_{n-1}}^{\ast}\nu \]
		and similarly for $\nu_{y,m}$. For the sake of notation divide $y_{1}y_{2}\cdots y_{N}$ into words $\ol{w}^{1}=y_{1}\cdots y_{n_{1}+1}$, $\ol{w}^{2}=y_{n_{1}+1}\cdots y_{n_{1}+n_{2}+1}$ and so on. Thus by lemma \ref{DualconeMetricComputation} 
		\[ \abs{f_{n}(y)-f_{m}(y)}\leq \Theta_{+}(\L_{y_{n}y_{n-1}}\cdots \L_{y_{2}y_{1}}(\L_{y_{1}y_{0}}1), \L_{y_{n}y_{n-1}}\cdots \L_{y_{2}y_{1}}1), \]
		where $\Theta_{+}$ is the Hilbert metric on the the cone of non-negative functions. In addition by our choice of $\alpha_{k}$ we have
		\[ R_{\ol{w}^{1}_{\text{in}}}1,R_{\ol{w}^{1}_{\text{in}}}\L_{y_{0}y_{1}}1 \in \c(\text{in}, \ol{w}^{1},d_{0}).\]
		Thus by theorem \ref{BirkhoffContraction} and lemma \ref{ContractionLemma} we have that
		\begin{align*}
		\abs{f_{n}(y)-f_{m}(y)}&\leq \Theta_{+}(\L_{y_{n}y_{n-1}}\cdots \L_{y_{2}y_{1}}(\L_{y_{1}y_{0}}1), \L_{y_{n}y_{n-1}}\cdots \L_{y_{2}y_{1}}1)\\
		&\leq \Theta_{\c(\text{out}, \ol{w}^{k}, d_{k})}(\L_{\ol{w}^{k}}\cdots \L_{\ol{w}^{1}}(\L_{y_{1}y_{0}}1), \L_{\ol{w}^{k}}\cdots \L_{\ol{w}^{1}}1)\\
		&\leq \gamma^{k-1}\Theta_{\c(\text{in},\ol{w}^{2},d_{1})}(R_{\ol{w}^{2}_{\text{in}}}\L_{\ol{w}^{1}}(\L_{y_{1}y_{0}}1),R_{\ol{w}^{2}_{\text{in}}}\L_{\ol{w}^{1}}1)\\
		&\leq\gamma^{k-1}\Theta_{\c(\text{out},\ol{w}^{1},d_{1})}(\L_{\ol{w}^{1}}(\L_{y_{1}y_{0}}1),\L_{\ol{w}^{1}}1)\leq \gamma^{k-1}D < \e.
		\end{align*}
		Therefore $\set{f_{n}}$ is Cauchy in $C(Y)$ and thus converges by completeness. To see that $\psi$ is Bowen notice that 
		\begin{align*}
		S_{n}\psi(y)&=\sum_{k=0}^{n-1}\lim_{m \to \infty}\log \frac{ \inn{\L_{y_{m}y_{m-1}}\cdots \L_{y_{k+2}y_{k+1}}\L_{y_{k+1}y_{k}}1, \nu}}{\inn{\L_{y_{m}y_{m-1}}\cdots \L_{y_{k+2}y_{k+1}}1, \nu}}\\
		&=\lim_{m \to \infty}\log \frac{\inn{\L_{y_{m}y_{m-1}}\cdots \L_{y_{n+1}y_{n}}\L_{y_{n}y_{n-1}}\cdots \L_{y_{1}y_{0}}1, \nu}}{\inn{\L_{y_{m}y_{m-1}}\cdots \L_{y_{n+1}y_{n}}1, \nu}}.
		\end{align*}
		Thus for any $n$ and $y,y' \in Y$ such that $y_{i}= y_{i}'$ for all $0 \leq i \leq n-1$ we have that
		\begin{align*}
		&\abs{S_{n}\psi(y)- S_{n}\psi(y')}\\
		&= \lim_{m \to \infty}\abs{\log \frac{\inn{\L_{y_{m}y_{m-1}}\cdots \L_{y_{n+1}y_{n}}\L_{y_{n}y_{n-1}}\cdots \L_{y_{1}y_{0}}1, \nu}\inn{\L_{y_{m}'y_{m-1}'}\cdots \L_{y_{n+1}'y_{n}'}1, \nu}}{\inn{\L_{y_{m}y_{m-1}}\cdots \L_{y_{n+1}y_{n}}1,\nu}\inn{\L_{y_{m}'y_{m-1}'}\cdots \L_{y_{n+1}'y_{n}'}\L_{y_{n}'y_{n-1}'}\cdots \L_{y_{1}'y_{0}'}1, \nu}}}\\
		&= \lim_{m \to \infty}\abs{\log \frac{\inn{\L_{y_{n}y_{n-1}}\cdots \L_{y_{1}y_{0}}1, \nu_{y,m}}\inn{1, \nu_{y',m}}}{\inn{1, \nu_{y,m}}\inn{\L_{y_{n}y_{n-1}}\cdots \L_{y_{1}y_{0}}1, \nu_{y',m}}}}\\
		&\leq \Theta_{\c(\text{out}, \ol{w},+)}(\L_{y_{n}y_{n-1}}\cdots \L_{y_{1}y_{0}}1, 1).
		\end{align*}
		Notice that for any word $w=w_{0}\cdots w_{n}$ admissible in $\S_{A}^{+}$ and $x,x' \in [w_{n}]$ we have that
		\begin{align*}
		L_{w}1(x)&=e^{S_{n}\varphi(w_{0}\cdots w_{n-1}x)-S_{n}\varphi(w_{0}\cdots w_{n-1}x')+S_{n}\varphi(w_{0}\cdots w_{n-1}x')}\\
		&\leq e^{K} L_{w}1(x').
		\end{align*}
		Where $K$ is the constant in the Bowen property for $\varphi$. Thus for any word $\ol{w}$ admissible in $Y$ we have that $\L_{\ol{w}}1 \in \c(\text{out},\ol{w}, +B)$ (note that $K < \frac{2+K}{\sigma}=B$). Therefore restricting $\L_{\ol{w}}1$ as necessary (in the same way as the paragraph preceding equation \eqref{restrictionconecontraction}) we have that for $n \geq N+1$
		\begin{align*}
		\Theta_{\c(\text{out}, \ol{w},+)}(\L_{y_{n-1}y_{n-2}}\cdots\L_{y_{n-N}y_{n-N-1}}(\L_{y_{n-N-1}y_{n-N-2}}\cdots \L_{y_{1}y_{0}}1), 1)\leq C 
		\end{align*}
		where $N$ and $C$ are as in lemma \ref{M}. Hence $\psi$ is Bowen.
		
		\item 
		Notice that for any probability measure $\eta$ and cylinder set $[I]$ we have that
		\begin{align*}
		e^{-K}C_{1}\mu_{\varphi}([I])\leq e^{-K} \sup_{z:Iz \in \S_{A}}e^{S_{n}\varphi(Iz)}\leq \int e^{S_{n}\varphi(Iz)}d\eta(z) \leq \sup_{z:Iz \in \S_{A}}e^{S_{n}\varphi(Iz)}\leq C_{2}\mu_{\varphi}([I]).
		\end{align*}
		Thus
		\begin{align*}
		e^{S_{n}\psi(y)}&= \lim_{m \to \infty}\frac{\inn{\L_{y_{m}y_{m-1}}\cdots \L_{y_{n+1}y_{n}}\L_{y_{n}y_{n-1}}\cdots \L_{y_{1}y_{0}}1, \nu}}{\inn{\L_{y_{m}y_{m-1}}\cdots \L_{y_{n+1}y_{n}}1, \nu}}\\
		&= \lim_{m \to \infty}\inn{\L_{y_{n}y_{n-1}}\cdots \L_{y_{1}y_{0}}1, \frac{\nu_{y,m}}{\inn{1,\nu_{y,m}}}}\\
		&= \lim_{m \to \infty}\sum_{I:\pi(I)=y_{0}\cdots y_{n-1}}\int_{\S_{A}}e^{S_{n}\varphi(Iz)}d\left(\frac{\nu_{y,m}}{\inn{1,\nu_{y,m}}}\right)\\
		&\geq e^{-K}C_{1} \pi_{\ast}\mu_{\varphi}[y_{0}\cdots y_{n-1}]
		\end{align*}
		where 
		\[ \nu_{y,m}= \L_{y_{n+1}y_{n}}^{\ast}\cdots \L_{y_{m}y_{m-1}}^{\ast}\nu. \]
		Similarly
		\begin{align*}
		e^{S_{n}\psi(y)}\leq C_{2} \pi_{\ast}\mu_{\varphi}[y_{0}\cdots y_{n-1}].
		\end{align*}
		Observe that this implies that the pressure of $\psi$ is $0$. The result is then simply an application of the Gibbs inequality for $\mu_{\psi}$.
		
		\item 
		By $2$ be have that $\mu_{\psi}$ and $\pi_{\ast}\mu$ are mutually absolutely continuous. As they are ergodic this implies they are equal. 
	\end{enumerate}
\end{proof}

\begin{remark}
	Observe that if $\varphi$ is the logarithm of a $g$-function then $\nu = \mu_{\varphi}$ and the function $\psi$ is the logarithm of the $g$ function for $\pi_{\ast}\mu_{\varphi}$. Thus the proof of theorem \ref{mainthm} will also yield corollary \ref{maincor}.
\end{remark}

What remains to prove theorem \ref{mainthm} is the following.

\begin{theorem}
	Suppose that $\varphi$ is Walters (respectively H\"older). Then the function
	\[ \psi(y)=\lim_{m \to \infty}\log \frac{ \inn{\L_{y_{m}y_{m-1}}\cdots \L_{y_{2}y_{1}}\L_{y_{1}y_{0}}1, \nu}}{\inn{\L_{y_{m}y_{m-1}}\cdots \L_{y_{2}y_{1}}1, \nu}} \]
	is Walters (respectively H\"older).
\end{theorem}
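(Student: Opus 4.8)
The plan is to rerun the argument of Proposition~\ref{BowenProp}(1) almost verbatim, changing only the sequence $\alpha_{k}$ fed into Procedure~\ref{coneconstructionprocedure}. Instead of $\alpha_{k}=\var_{k}\varphi$ I would take
\[ \alpha_{k}=\sup_{n\geq 1}\var_{n+k}S_{n}\varphi \]
(the modulus appearing in the definition of the Walters property, which we may assume strictly positive since otherwise $\varphi$ has finite range and $\psi$ is trivially H\"older). When $\varphi$ is Walters this sequence is positive, decreasing, and tends to $0$ by Definition~\ref{defofCUR}; when $\varphi$ is H\"older the estimate $\var_{n+k}S_{n}\varphi\leq\sum_{i=0}^{n-1}\abs{\varphi}_{\theta}\theta^{n+k-i}\leq\frac{\abs{\varphi}_{\theta}\theta}{1-\theta}\theta^{k}$, uniform in $n$, shows $\alpha_{k}$ decays geometrically. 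The point of this choice is that $d_{0,k}=\alpha_{k}$, so that $\var_{n+\ell}S_{n}\varphi\leq\alpha_{\ell}=d_{0,\ell}$ holds \emph{uniformly in} $n$.

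First I would record the resulting uniform regularity estimate: for every $n$ and every admissible $\ol{w}=y_{0}\cdots y_{n}$ the function $g_{n}:=\L_{y_{n}y_{n-1}}\cdots\L_{y_{1}y_{0}}1$ lies in $\c(\text{out},\ol{w},d_{0})$, with constants independent of $n$. Indeed, on a cylinder $[a]$ this function equals $\sum_{w}e^{S_{n}\varphi(w_{0}\cdots w_{n-1}x)}$ (the sum over $w$ with $\pi(w)=\ol{w}$ and $w_{n}=a$); if $x,x'\in[a]$ agree in $\ell$ places then $w_{0}\cdots w_{n-1}x$ and $w_{0}\cdots w_{n-1}x'$ agree in $n+\ell$ places, so each summand changes by a factor at most $e^{\var_{n+\ell}S_{n}\varphi}\leq e^{\alpha_{\ell}}=e^{d_{0,\ell}}$, and hence so does the sum. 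This is exactly the step that breaks down for a merely Bowen potential, where $\alpha_{k}$ need not tend to $0$, and it is what dictates the choice of $\alpha_{k}$ above.

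With this uniform estimate in hand the computation is the one in Proposition~\ref{BowenProp}(1). Fix $n,k$ and let $y,y'\in Y$ agree in the first $n+k$ coordinates. Starting from the telescoped formula
\[ S_{n}\psi(y)=\lim_{m\to\infty}\log\frac{\inn{\L_{y_{m}y_{m-1}}\cdots\L_{y_{1}y_{0}}1,\nu}}{\inn{\L_{y_{m}y_{m-1}}\cdots\L_{y_{n+1}y_{n}}1,\nu}} \]
and pushing the shared operators $\L_{y_{n+1}y_{n}},\dots,\L_{y_{n+k-1}y_{n+k-2}}$ back onto the primal side, Lemma~\ref{DualconeMetricComputation} gives
\[ \abs{S_{n}\psi(y)-S_{n}\psi(y')}\leq\Theta_{+}\!\left(\L_{\ol{v}}g_{n},\L_{\ol{v}}1\right), \]
where $\ol{v}=y_{n}\cdots y_{n+k-1}$ is the shared block, carrying $k-1$ operators. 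By the previous paragraph $g_{n},1\in\c(\text{in},\ol{v},d_{0})$ after the restriction to $\ol{v}_{\text{in}}$ used in Proposition~\ref{BowenProp}, uniformly in $n$; splitting $\ol{v}$ into consecutive subblocks of lengths $n_{1},\dots,n_{p}$ with $n_{1}+\cdots+n_{p}\leq k-1$ and invoking Lemma~\ref{ContractionLemma} (its first part for the first subblock, its second part for the rest) yields
\[ \Theta_{+}\!\left(\L_{\ol{v}}g_{n},\L_{\ol{v}}1\right)\leq\gamma^{\,p-1}D, \]
where $p=p(k)$ is the number of subblocks.

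Finally I would read off both statements. Since the subblock lengths $n_{1},n_{2},\dots$ are fixed, $p(k)$ depends only on $k$ and $p(k)\to\infty$, so $\sup_{n\geq 1}\var_{n+k}S_{n}\psi\leq\gamma^{p(k)-1}D\to 0$ as $k\to\infty$ and $\psi$ is Walters. When $\varphi$ is H\"older the sequence $\alpha_{k}$ is geometric, so by the remark after Procedure~\ref{coneconstructionprocedure} the $n_{j}$ may be taken bounded; then $p(k)\geq ck$ for some $c>0$, whence $\var_{n+k}S_{n}\psi\leq D\gamma^{ck-1}$, and taking $n=1$ gives $\var_{m}\psi\leq D\gamma^{c(m-1)-1}$, i.e. $\psi$ is H\"older. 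The one genuine obstacle is the uniformity in $n$: the function $g_{n}$ naturally sits in the deep cone $\c(\cdot,\sigma d_{J(n)})$ with $J(n)\to\infty$, and running the contraction there would force subblocks of length $n_{J(n)+1}\to\infty$ and ruin uniformity; taking $\alpha_{k}$ to be the modulus of continuity of the Birkhoff sums is precisely what places every $g_{n}$ in the one fixed cone $\c(\cdot,d_{0})$ and makes the contraction count depend on $k$ alone.
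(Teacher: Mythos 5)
Your proposal is correct and takes essentially the same route as the paper's own proof: the same choice $\alpha_{k}=\sup_{n\geq 1}\var_{n+k}S_{n}\varphi$ fed into Procedure \ref{coneconstructionprocedure}, the same key uniform-in-$n$ observation that $\L_{y_{n}y_{n-1}}\cdots \L_{y_{1}y_{0}}1$ lies (after restriction) in the single fixed cone with metric $d_{0}$, and the same contraction chain via Lemma \ref{DualconeMetricComputation} and Lemma \ref{ContractionLemma} giving the bound $\gamma^{k-1}D$ uniformly in $n$. The only difference is that you spell out the H\"older case explicitly (bounded $n_{j}$, hence geometrically many contraction blocks), which the paper defers to a citation.
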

\begin{proof}
	We will prove the result in the case that $\varphi$ is Walters. The H\"older case is similar to the previous section \cite{HolderGibbsStates}. Let $\alpha_{k}=\sup_{n \geq 1}\var_{n+k}S_{n}\varphi$ which by the assumption that $\varphi$ is Walters converges to $0$ (and thus can be used as an input for procedure \ref{coneconstructionprocedure}). Define the sequence $\set{n_{j}}$ based on procedure \ref{coneconstructionprocedure}. Set $d_{j}$ to be the sequences of metrics based on $\set{\alpha_{i}}$ and $\set{n_{j}}$ as in \eqref{eq:sequenceofmetric}.
	
	Let $\e>0$. To show that $\psi$ has the Walters property we need to show that there exists $N$ such that for all $j \geq N$, $\var_{n+j}S_{n}\psi < \e$ for all $n \geq 1$. To do so suppose that $n \geq 1$, $j \geq 0$ and that $y, y' \in Y$ are such that $y_{i}=y_{i}'$ for all $0 \leq i \leq n+j-1$. In the same way as the proof of proposition \ref{BowenProp} we have that
	\begin{align*}
	&\abs{S_{n}\psi(y)-S_{n}\psi(y')}\\
	&=\lim_{m \to \infty}\abs{\log \frac{ \inn{\L_{y_{n+j-1}y_{n+j-2}}\cdots \L_{y_{n+1}y_{n}}\L_{y_{n}y_{n-1}}\cdots \L_{y_{1}y_{0}}1, \nu_{y,m}}\inn{\L_{y_{n+j-1}y_{n+j-2}}\cdots \L_{y_{n+1}y_{n}}1, \nu_{y',m}}}{\inn{\L_{y_{n+j-1}y_{n+j-2}}\cdots \L_{y_{n+1}y_{n}}1, \nu_{y,m}}\inn{\L_{y_{n+j-1}y_{n+j-2}}\cdots \L_{y_{n+1}y_{n}}\L_{y_{n}y_{n-1}}\cdots \L_{y_{1}y_{0}}1, \nu_{y',m}}}}
	\end{align*}
	where
	\[ \nu_{y,m}= \L_{y_{n+j}y_{n+j-1}}^{\ast}\cdots \L_{y_{m}y_{m-1}}^{\ast}\nu \]
	and similarly for $\nu_{y',m}$. Thus by lemma \ref{DualconeMetricComputation}
	\[ \abs{S_{n}\psi(y)-S_{n}\psi(y')}\leq \Theta_{+}(\L_{y_{n+j-1}y_{n+j-2}}\cdots \L_{y_{n+1}y_{n}}(\L_{y_{n}y_{n-1}}\cdots \L_{y_{1}y_{0}}1), \L_{y_{n+j-1}y_{n+j-2}}\cdots \L_{y_{n+1}y_{n}}1). \]
	Let $D$ and $\gamma$ be as in lemma \ref{ContractionLemma}, take $k$ such that $\gamma^{k-1}D<\e$ and set $N=1+\sum_{i=1}^{k}n_{i}$ (note $N$ depends only on $\e$ and not on $n,y$ or $y'$). Assume that $j \geq N$. For notational purposes divide $y_{n}y_{n+1}\cdots y_{n+N}$ into words $\ol{w}^{1}=y_{n}\cdots y_{n+n_{1}}$, $\ol{w}^{2}=y_{n+n_{1}}\cdots y_{n+n_{1}+n_{2}}$ and so on. Notice that by our choice of the sequence $\set{\alpha_{k}}$ we have that for any $x \in \S_{A}^{+}$ 
	\[L_{x_{n}x_{n-1}}\cdots L_{x_{1}x_{0}}1 \in \c([x_{n}],d_{0}) \]
	and thus
	\[ R_{\ol{w}^{1}_{\text{in}}}1,R_{\ol{w}^{1}_{\text{in}}}\L_{y_{n}y_{n-1}}\cdots \L_{y_{1}y_{0}}1 \in \c(\text{in}, \ol{w}^{1},d_{0}). \]
	The picture to have in mind through the remainder of the proof is the following
	\begin{align*}
	\cdots\xleftarrow{R_{\ol{w}^{3}_{\text{in}}}}\c(\text{out}, \ol{w}^{2},  d_{2})\xleftarrow{\L_{\ol{w}^{2}}}\c(\text{in}, \ol{w}^{2},  d_{1})\xleftarrow{R_{\ol{w}^{2}_{\text{in}}}}\c(\text{out}, \ol{w}^{1}, d_{1})\xleftarrow{\L_{\ol{w}^{1}}}\c(\text{in}, \ol{w}^{1}, d_{0}).
	\end{align*}
	By lemma \ref{ContractionLemma} each $L_{\ol{w}^{i}}$ contracts the relevant cones by a factor of $\gamma$ and thus we have
	\begin{align*}
	\abs{S_{n}\psi(y)-S_{n}\psi(y')}&\leq \Theta_{+}(\L_{y_{n+j-1}y_{n+j-2}}\cdots \L_{y_{n+1}y_{n}}(\L_{y_{n}y_{n-1}}\cdots \L_{y_{1}y_{0}}1), \L_{y_{n+j-1}y_{n+j-2}}\cdots \L_{y_{n+1}y_{n}}1)\\
	&\leq \Theta_{\c(\text{out}, \ol{w}^{k}, d_{k})}(\L_{\ol{w}^{k}}\cdots \L_{\ol{w}^{1}}(\L_{y_{n}y_{n-1}}\cdots \L_{y_{1}y_{0}}1), \L_{\ol{w}^{k}}\cdots \L_{\ol{w}^{1}}1)\\
	&\leq \gamma^{k-1}\Theta_{\c(\text{in},\ol{w}^{2},d_{1})}(R_{\ol{w}^{2}_{\text{in}}}\L_{\ol{w}^{1}}(\L_{y_{n}y_{n-1}}\cdots \L_{y_{1}y_{0}}1),R_{\ol{w}^{2}_{\text{in}}}\L_{\ol{w}^{1}}1)\\
	&\leq\gamma^{k-1}\Theta_{\c(\text{out},\ol{w}^{1},d_{1})}(\L_{\ol{w}^{1}}(\L_{y_{n}y_{n-1}}\cdots \L_{y_{1}y_{0}}1),\L_{\ol{w}^{1}}1)\leq \gamma^{k-1}D < \e.
	\end{align*}
	Thus taking the supremum over $y,y'$ we have that $\var_{n+j}S_{n}\psi<\e$ and taking the supremum over $n$ gives $\sup_{n \geq 1}\var_{n+j}S_{n}\psi <\e$. Therefore $\sup_{n}\var_{n+j}S_{n}\psi \xrightarrow{ j \to \infty}0$ and $\psi$ is Walters.
\end{proof}

\section{Final remarks}
\begin{enumerate}
	\item 
	Similar results hold on countable state topological Markov shifts with a strong ``mixing in fibers'' assumption on the factor map.
	
	\item 
	The theory of factors of Gibbs measures in dimension greater then $1$ is well known to be filled with strange phenomena such as loss of Gibbsianness. This is in stark contrast to the results we have presented in this chapter. It is worth pointing out that the uniqueness regimes we have considered in this article do not exhibit many pathologies which are prevalent in higher dimensions. In particular they do not undergo phase transitions.
	
	\item 
	One interesting direction to move in an attempt to bridge the gap between our results and those in higher dimension would be to consider factors of Markov measures on countable state topological Markov shifts. These represent an intriguing class with respect to the current theory. On the one hand they can exhibit phase transitions and on the other they are fairly tractable objects with a well developed theory. It seems likely that one could understand in a fairly explicit way the existence of so called ``hidden phase transitions'' in these models and their connection to loss of Gibbsianness. We leave this for future work.
\end{enumerate}

\begin{acknowledgements}
	This manuscript is part of the author's thesis prepared at the University of Victoria. I am grateful to my supervisors Chris Bose and Anthony Quas for many helpful discussions and comments on versions of this manuscript.
\end{acknowledgements}

\pagebreak

\section{Appendix: Cones in $C(\S_{A}^{+})$}
\begin{definition}
	Let $V$ be a real vector space a subset $\c \subseteq V$ is called a cone if 
	\begin{enumerate}
		\item
		$a\c  =\c$ for all $a >0$.
		
		\item
		$\c$ is convex.
		
		\item 
		$\c \cap (- \c) = \emptyset$
	\end{enumerate}
	if $\c$ satisfies only $(1)$ and $(2)$ we say that $\c$ is a wedge. We say that $\c$ is a closed cone if $\c \cup\set{0}$ is closed.
\end{definition}

Every cones induces a partial ordering on the vector space $V$ by
\[ x \leq_{\c} y \iff y-x \in \c \cup\set{0} \] 
for $x\in \c$ and $y \in V$ we say that $x$ \emph{dominates} $y$ if there exists $\alpha, \beta$ such that
\[ \alpha x \leq_{\c} y \leq_{\c} \beta x \]
if $x$ dominates $y$ and $y$ dominates $x$ then $x,y$ are said to be \emph{comparable}. A cone $\c$ is called \emph{almost Archimedean} if whenever $x \in \c$ and $y \in V$ are such that $-\e x \leq_{\c} y \leq_{\c} \e x$ for all $\e>0$ it follows that $y=0$. If $x\geq_{\c}0$ dominates $y$ then define
\begin{gather*}
m(y/x; \c)=\sup\set{\alpha : \alpha x \leq_{\c} y}\\
M(y/x; \c)=\inf \set{\beta:y \leq_{\c} \beta x}
\end{gather*}
if $x$ and $y$ are comparable we define 
\[ \Theta_{\c}(x, y)=\log\left(\frac{M(x/y;\c)}{m(x/y;\c)}\right).  \]

This is known as Hilbert's metric and has been used in a number of different contexts. In dynamics it is most often used to prove upper bounds on the rate of convergence for transfer operators. This function $\Theta_{\c}$ is a projective psuedo metric in the following sense.

\begin{proposition}
	Let $V$ be a real vector space and $\c \subseteq V$ a cone. Suppose that $x,y,z$ are comparable and $a,b >0$ then
	\begin{enumerate}
		\item 
		$\Theta_{\c}(x,y)\geq 0$.
		
		\item 
		$\Theta_{\c}(x,y)=\Theta_{\c}(y,x)$.
		
		\item 
		$\Theta_{\c}(x,y)\leq \Theta_{\c}(x,z)+\Theta_{\c}(z,y)$.
		
		\item 
		$\Theta_{\c}(ax,by)=\Theta_{\c}(x,y)$.
	\end{enumerate}
	If $\c$ is almost Archemedean then
	\[ \Theta_{\c}(x,y)=0 \implies x= a y \]
	for some $a> 0$.
\end{proposition}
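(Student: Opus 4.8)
The plan is to derive all five assertions directly from the definitions of $m(\cdot/\cdot;\c)$, $M(\cdot/\cdot;\c)$ together with the three cone axioms, organizing everything around two elementary facts. First, a convex cone is closed under addition: if $u,v\in\c$ then $u+v=2\cdot\frac{u+v}{2}\in\c$ by convexity and positive homogeneity. Second, multiplication by a positive scalar preserves the order $\leq_{\c}$, since $v-u\in\c\cup\{0\}$ implies $t(v-u)=tv-tu\in\c\cup\{0\}$ for $t>0$. The first fact shows the comparison sets $\{\alpha:\alpha y\leq_{\c}x\}$ and $\{\beta:x\leq_{\c}\beta y\}$ are genuine intervals; the second drives all the scaling identities.

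For non-negativity (1), I would show $M(x/y;\c)\geq m(x/y;\c)$: whenever $\alpha y\leq_{\c}x\leq_{\c}\beta y$, transitivity gives $(\beta-\alpha)y\in\c\cup\{0\}$, and were $\beta<\alpha$ we would have $(\alpha-\beta)y\in\c\cap(-\c)$ with $(\alpha-\beta)y\neq0$ (as $y\in\c$ forces $y\neq0$), contradicting the third cone axiom. Hence every admissible $\beta$ exceeds every admissible $\alpha$, so $M\geq m$ and $\log(M/m)\geq0$. For symmetry (2) I would record the reciprocal identities $m(x/y;\c)=1/M(y/x;\c)$ and $M(x/y;\c)=1/m(y/x;\c)$: rewriting $\alpha y\leq_{\c}x$ as $y\leq_{\c}\alpha^{-1}x$ and using that $t\mapsto t^{-1}$ is order-reversing on the positive reals turns a supremum over $\alpha$ into a reciprocal infimum over $\beta=\alpha^{-1}$; substituting these into the definition of $\Theta_{\c}$ collapses it to $\Theta_{\c}(y,x)$. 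Property (4) is the parallel bookkeeping: dividing the defining inequalities by the positive scalars $a,b$ yields $m(ax/by;\c)=(a/b)\,m(x/y;\c)$ and $M(ax/by;\c)=(a/b)\,M(x/y;\c)$, so the factor $a/b$ cancels in the ratio.

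The \emph{substantive} step is the triangle inequality (3), which I would obtain from the super/sub-multiplicativity of $m$ and $M$ along the chain $x,z,y$. If $\alpha z\leq_{\c}x$ and $\alpha'y\leq_{\c}z$, then scaling the second inequality by $\alpha>0$ and applying transitivity gives $\alpha\alpha'y\leq_{\c}\alpha z\leq_{\c}x$; taking suprema over such $\alpha,\alpha'$ yields $m(x/z;\c)\,m(z/y;\c)\leq m(x/y;\c)$, and the dual argument gives $M(x/y;\c)\leq M(x/z;\c)\,M(z/y;\c)$. Dividing these two and taking logarithms produces exactly $\Theta_{\c}(x,y)\leq\Theta_{\c}(x,z)+\Theta_{\c}(z,y)$. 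The main obstacle I anticipate is handling the possible non-attainment of the defining suprema and infima cleanly, both here and in the final claim; I expect to sidestep it by arguing with arbitrary admissible scalars $\alpha,\alpha',\beta,\beta'$ and only passing to $m,M$ at the very end, so that no extremum ever needs to be attained.

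For the closing almost-Archimedean statement, suppose $\Theta_{\c}(x,y)=0$, so that $m(x/y;\c)=M(x/y;\c)=:\lambda>0$. Using that $\{\alpha:\alpha y\leq_{\c}x\}$ is downward closed and $\{\beta:x\leq_{\c}\beta y\}$ is upward closed (both via additivity of $\c$, as above), I would conclude $\alpha y\leq_{\c}x$ for every $\alpha<\lambda$ and $x\leq_{\c}\beta y$ for every $\beta>\lambda$. Writing $w=x-\lambda y$ and setting $\e=\lambda-\alpha>0$, respectively $\e=\beta-\lambda>0$, these rearrange to $-\e y\leq_{\c}w\leq_{\c}\e y$ for all $\e>0$. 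Since $y\geq_{\c}0$, the almost Archimedean hypothesis applied to the cone element $y$ and the vector $w$ then forces $w=0$, i.e.\ $x=\lambda y$ with $a=\lambda>0$, completing the proof.
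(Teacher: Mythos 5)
Your proof is correct, and it matches the paper's approach in the only sense available: the paper offers no argument of its own, asserting that all five claims ``follow by direct computation'' and citing an external reference, and your argument is precisely that direct computation (additivity of the cone giving $M\geq m$ via the axiom $\c\cap(-\c)=\emptyset$, reciprocal identities for symmetry, scaling of the admissible sets for (4), super/sub-multiplicativity of $m$ and $M$ for the triangle inequality, and the sandwich $-\e y\leq_{\c}w\leq_{\c}\e y$ for the almost Archimedean claim). The one point worth adding is a one-line justification of your assertion $\lambda=m(x/y;\c)>0$: comparability gives some $\beta$ with $x\leq_{\c}\beta y$, and necessarily $\beta>0$ (else $\beta y-x\in\c\cup\{0\}$ together with $x\in\c$ would put a nonzero element in $\c\cap(-\c)$), whence $\beta^{-1}y\leq_{\c}x$ and $m(x/y;\c)\geq\beta^{-1}>0$ --- a fact you also use implicitly throughout, since $\Theta_{\c}=\log(M/m)$ is only well defined once $0<m\leq M<\infty$.
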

\begin{proof}
	These follow by direct computation. See for example \cite[lemma 2.7]{MR1297895}.
\end{proof}

The true utility of Hilbert's metric is the following theorem.

\begin{theorem}\label{BirkhoffContraction}
	(Birkhoff \cite{BirkhoffHilbertMetric}) Let $\c_{1},\c_{2}$ be closed cones and $L:V_{1} \to V_{2}$ a linear map such that $L\c_{1} \subseteq \c_{2}$. Then for all $\phi , \psi \in \c_{1}$
	\[ \Theta_{\c_{2}}(L\phi , L\psi) \leq \tanh\left(\frac{\diam_{\c_{2}}(L\c_{1})}{4}\right)\Theta_{\c_{1}}(\phi , \psi) \]
	where 
	\[ \diam_{\c_{2}}(L\c_{1})=\sup\set{\Theta_{\c_{2}}(f,g ):f,g\in L\c_{1}} \]
	and $\tanh \infty =1$.
\end{theorem}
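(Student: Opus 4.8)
The plan is to control $\var_{n+j}S_n\psi$ \emph{uniformly in} $n$ and show that it tends to $0$ as $j\to\infty$, which is exactly the Walters property for $\psi$; the Hölder case then follows from the same machinery with a quantitative rate. First I would fix $\e>0$ and feed Procedure \ref{coneconstructionprocedure} the input sequence $\alpha_k=\sup_{n\geq 1}\var_{n+k}S_n\varphi$, which converges to $0$ precisely because $\varphi$ is Walters. This produces $\{n_j\}$ and the metrics $\{d_j\}$ of \eqref{eq:sequenceofmetric} and, decisively, guarantees the uniform-in-$n$ membership
\[ L_{x_nx_{n-1}}\cdots L_{x_1x_0}1\in\c([x_n],d_0)\qquad\text{for every }x\in\S_{A}^{+}\text{ and every }n\geq 1. \]
This is the one place where the full Walters hypothesis (rather than merely Bowen) is used: the whole tail $\L_{y_ny_{n-1}}\cdots\L_{y_1y_0}1$ can be loaded into a fixed input cone no matter how large $n$ is.

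Next, reusing the telescoping identity for $S_n\psi$ from the proof of Proposition \ref{BowenProp}, I would take $y,y'\in Y$ agreeing on their first $n+j$ symbols and apply the dual-cone computation (Lemma \ref{DualconeMetricComputation}) to obtain
\[ \abs{S_n\psi(y)-S_n\psi(y')}\leq \Theta_{+}\bigl(\L_{y_{n+j-1}y_{n+j-2}}\cdots\L_{y_{n+1}y_n}(\L_{y_ny_{n-1}}\cdots\L_{y_1y_0}1),\,\L_{y_{n+j-1}y_{n+j-2}}\cdots\L_{y_{n+1}y_n}1\bigr). \]
Since $y$ and $y'$ share their first $n+j$ coordinates, both arguments are acted on by the same operators $\L_{y_{n+1}y_n},\dots,\L_{y_{n+j-1}y_{n+j-2}}$, so the contraction proceeds along a common word. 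I would cut $y_ny_{n+1}\cdots y_{n+N}$ into blocks $\ol{w}^1=y_n\cdots y_{n+n_1}$, $\ol{w}^2=y_{n+n_1}\cdots y_{n+n_1+n_2}$, and so on; the displayed membership then places both $R_{\ol{w}^1_{\text{in}}}1$ and $R_{\ol{w}^1_{\text{in}}}\L_{y_ny_{n-1}}\cdots\L_{y_1y_0}1$ in $\c(\text{in},\ol{w}^1,d_0)$, and a block-by-block application of Lemma \ref{ContractionLemma} contracts the Hilbert metric by $\gamma=\tanh(D/4)<1$ per block. After $k$ blocks the initial diameter bound $D$ of Lemma \ref{ContractionLemma}(1) is reduced to $\gamma^{k-1}D$.

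Choosing $k$ with $\gamma^{k-1}D<\e$ and setting $N=1+\sum_{i=1}^k n_i$, I obtain $\abs{S_n\psi(y)-S_n\psi(y')}\leq\gamma^{k-1}D<\e$ whenever $j\geq N$. The crucial point---and the main obstacle the whole cone construction is designed to overcome---is that $N$ depends only on $\e$ and not on $n$, $y$, or $y'$, because the fixed input cone $\c(\text{in},\ol{w}^1,d_0)$ absorbs the entire $n$-dependent tail. Taking suprema first over admissible $y,y'$ and then over $n\geq 1$ gives $\sup_{n\geq 1}\var_{n+j}S_n\psi<\e$ for all $j\geq N$, hence $\sup_{n\geq 1}\var_{n+j}S_n\psi\to 0$ and $\psi$ is Walters.

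For the Hölder case I would run the identical argument with $\alpha_k=\abs{\varphi}_\theta\theta^k$; by the remark following Procedure \ref{coneconstructionprocedure} one may then take $n_j=1$ and $d_{j,k}=\abs{\varphi}_\theta\theta^{k+1}/(\sigma-\theta)$ for all $j$. With unit-length blocks each single operator already contracts by $\gamma$, so matching $n+j$ coordinates supplies on the order of $j$ contracting blocks and the bound becomes $\sup_{n\geq 1}\var_{n+j}S_n\psi\leq C\gamma^{j}$, i.e.\ geometric decay in $j$ uniformly in $n$, which is equivalent to $\psi$ being Hölder. I would defer the routine constants in this last estimate to the analogous computation in \cite{HolderGibbsStates}.
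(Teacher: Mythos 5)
Your proposal does not prove the stated theorem. The statement in question is Birkhoff's cone-contraction theorem: for closed cones $\c_{1},\c_{2}$ in real vector spaces and a linear map $L:V_{1}\to V_{2}$ with $L\c_{1}\subseteq \c_{2}$, one must show
\[ \Theta_{\c_{2}}(L\phi,L\psi)\leq \tanh\left(\frac{\diam_{\c_{2}}(L\c_{1})}{4}\right)\Theta_{\c_{1}}(\phi,\psi). \]
This is a purely cone-theoretic fact; it involves no shift space, no factor map, no potential $\varphi$, and none of the apparatus of Section \ref{sec:CUR}. What you have written instead is a sketch of the proof of the paper's final theorem in that section --- that $\psi$ is Walters (respectively H\"older) when $\varphi$ is --- which is a different statement entirely. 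The paper does not reprove Birkhoff's theorem; it cites \cite{BirkhoffHilbertMetric}, and a self-contained proof would proceed along the classical lines: reduce to the two-dimensional sections spanned by pairs of cone points, express the Hilbert metric there via cross-ratios, and show that a projective map of an interval whose image has hyperbolic diameter $\Delta$ is Lipschitz with constant $\tanh(\Delta/4)$ (see also \cite{LiveraniDOC}).

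Beyond being aimed at the wrong target, your argument is circular with respect to the actual statement: every contraction step you perform --- each factor of $\gamma=\tanh(D/4)$ extracted per block via Lemma \ref{ContractionLemma} --- is itself an application of Theorem \ref{BirkhoffContraction}, since the proof of Lemma \ref{ContractionLemma}(2) invokes exactly this theorem together with the diameter bound of part (1). So the theorem you were asked to prove appears as an input to your argument, not as its output. If your goal was the Walters/H\"older closure theorem, your sketch does track the paper's proof of that result (same choice of $\alpha_{k}$, same blocking of $y_{n}\cdots y_{n+N}$, same $n$-independence of $N$); but as a proof of the Birkhoff contraction inequality it contains no relevant content.
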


Given a closed cone $\c$ in a Banach space $X$ we can define the dual
\[ \c^{\ast}=\set{\psi \in X^{\ast}: \inn{x, \psi}\geq 0 \text{ for all } x \in \c} \]
Notice that if $L\c_{1}\subseteq \c_{2}$ then $L^{\ast}\c_{2}\subseteq \c_{1}^{\ast}$. To see this suppose that $\psi \in \c_{2}^{\ast}$ and $x \in \c_{1}$ then
\[ \inn{x, L^{\ast}\psi}= \inn{Lx,\psi} \geq 0. \]
A word of caution: in general $\c^{\ast}$ is not a cone but a wedge.

\begin{proposition}\label{DualconeMetricComputation}
	Let $\c$ be a closed cone and $x,y \in \c$ such that $x$ and $y$ are comparable. Then for any $\phi \in \c^{\ast}$, $\inn{x,\phi}=0$ if and only if $\inn{y,\phi}=0$ and
	\[ \Theta_\c(x,y) = \log \left(\sup\set{\frac{\inn{x, \phi}\inn{y,\psi}}{\inn{y,\phi}\inn{x,\psi}}:\psi,\phi \in \c^{\ast}\text{ and }\inn{y,\phi}\inn{x,\psi}\neq 0}\right)\]
	\begin{proof}
		The proof can be found in \cite[Lemma 1.4]{eveson_nussbaum_1995}.
	\end{proof}
\end{proposition}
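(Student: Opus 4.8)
The plan is to show that $\psi$ is Walters when $\varphi$ is Walters, and H\"older when $\varphi$ is H\"older. The strategy mirrors the computation of $\var_n \psi$ in Proposition \ref{BowenProp}, but we now track the faster decay coming from the Walters (respectively H\"older) assumption through the variation-controlled metrics $d_j$ produced by Procedure \ref{coneconstructionprocedure}. The key point is that the input sequence $\alpha_k$ to the procedure is chosen to encode exactly the regularity class of $\varphi$: for the Walters case we take $\alpha_k = \sup_{n \geq 1}\var_{n+k}S_n \varphi$, which tends to $0$ precisely because $\varphi$ is Walters, and for the H\"older case we exploit the remark following Procedure \ref{coneconstructionprocedure} that $n_j = 1$ for all $j$ with $d_{j,k}$ decaying geometrically in $k$.

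First I would fix $\e > 0$ and, working with the Walters input $\alpha_k$, set up the sequences $\{n_j\}$ and metrics $\{d_j\}$ from the procedure. The goal is to produce an $N$ (depending only on $\e$, not on $n$ or the points) such that $\var_{n+j}S_n\psi < \e$ for all $n \geq 1$ and all $j \geq N$; taking the supremum over $n$ and letting $j \to \infty$ then yields $\sup_{n \geq 1}\var_{n+j}S_n\psi \to 0$, which is the Walters property for $\psi$. To obtain this bound I would reprise the telescoping identity for $S_n \psi$ from Proposition \ref{BowenProp}, writing $S_n\psi(y)$ as a limit of logarithms of ratios of pairings $\inn{\L_\bullet 1, \nu}$, and then use Lemma \ref{DualconeMetricComputation} to dominate $\abs{S_n\psi(y) - S_n\psi(y')}$ (for $y,y'$ agreeing on coordinates $0$ through $n+j-1$) by a Hilbert-metric distance $\Theta_+$ between two images of $1$ under a common block of operators $\L_{y_{n+j-1}y_{n+j-2}}\cdots \L_{y_{n+1}y_n}$.

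The heart of the argument is then the cone contraction: I would segment the block of length $j$ into sub-blocks $\ol{w}^1, \ol{w}^2, \ldots$ of lengths $n_1, n_2, \ldots$, verify using the choice of $\alpha_k$ that the starting functions lie in $\c(\text{in}, \ol{w}^1, d_0)$ after restriction, and then apply Lemma \ref{ContractionLemma} to contract through the diagram
\[
\cdots \xleftarrow{R_{\ol{w}^3_{\text{in}}}} \c(\text{out}, \ol{w}^2, d_2) \xleftarrow{\L_{\ol{w}^2}} \c(\text{in}, \ol{w}^2, d_1) \xleftarrow{R_{\ol{w}^2_{\text{in}}}} \c(\text{out}, \ol{w}^1, d_1) \xleftarrow{\L_{\ol{w}^1}} \c(\text{in}, \ol{w}^1, d_0).
\]
Each $\L_{\ol{w}^i}$ contributes a factor of $\gamma = \tanh(D/4) < 1$, so after $k$ sub-blocks the distance is bounded by $\gamma^{k-1}$ times the initial diameter $D$ from Lemma \ref{ContractionLemma}. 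Choosing $k$ so that $\gamma^{k-1}D < \e$ and setting $N = 1 + \sum_{i=1}^k n_i$ completes the Walters case. For the H\"older case, the same scheme runs with $n_j = 1$ for all $j$, so the number of contracting sub-blocks fit into a window of length $j$ grows linearly in $j$; this converts the geometric cone contraction into a genuine geometric rate $\var_n \psi \leq C\theta'^n$ for some $0 < \theta' < 1$, i.e.\ the H\"older property, which is why the statement parenthetically asserts the stronger conclusion in that case.

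I expect the main obstacle to be bookkeeping the uniformity of $N$: one must confirm that the constants $D$ and $\gamma$ from Lemma \ref{ContractionLemma} depend only on $\varphi$ and the fiber-mixing data (via Lemma \ref{M}), and crucially not on $n$, $j$, or the particular points $y, y'$. The shift by $n$ in the block $\L_{y_{n+j-1}y_{n+j-2}}\cdots \L_{y_{n+1}y_n}$ is handled by the shift-invariance built into the definition of the metrics $d_j$ (they depend only on the number of agreeing coordinates $k(x,y)$), so the same cone estimates apply verbatim regardless of where along the sequence the block sits; verifying this translation-invariance carefully is the delicate part. The H\"older refinement additionally requires checking that the linear-in-$j$ accumulation of contraction factors genuinely survives the passage to the limit $m \to \infty$ defining $\psi$, which follows from the uniform convergence already established in Proposition \ref{BowenProp}.
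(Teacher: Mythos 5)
You have proved the wrong statement. Proposition \ref{DualconeMetricComputation} is a piece of abstract cone theory: it asserts that for comparable $x,y$ in a closed cone $\c$, any $\phi \in \c^{\ast}$ vanishes on $x$ if and only if it vanishes on $y$, and that $\Theta_{\c}(x,y)$ is the logarithm of the supremum of the cross-ratios $\inn{x,\phi}\inn{y,\psi}\big/\inn{y,\phi}\inn{x,\psi}$ over pairs of functionals in the dual wedge $\c^{\ast}$. What you have written is instead a plan for the final theorem of Section \ref{sec:CUR} (that $\psi$ inherits the Walters, respectively H\"older, property from $\varphi$): the sequences $\set{n_{j}}$, the metrics $d_{j}$, the restriction maps, and the contraction factor $\gamma=\tanh(D/4)$ are all irrelevant to the proposition at hand. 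Worse, read as a proof of the proposition your plan is circular: you explicitly invoke Lemma \ref{DualconeMetricComputation} to dominate $\abs{S_{n}\psi(y)-S_{n}\psi(y')}$ by a Hilbert-metric distance, i.e.\ you use the very duality formula you were asked to establish.

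A correct proof is purely functional-analytic and makes no use of transfer operators or Procedure \ref{coneconstructionprocedure}. Since $x,y$ are comparable there are $\alpha,\beta>0$ with $\alpha x \leq_{\c} y \leq_{\c} \beta x$; pairing with any $\phi \in \c^{\ast}$ gives $\alpha\inn{x,\phi}\leq \inn{y,\phi}\leq \beta\inn{x,\phi}$, which proves the vanishing equivalence. For the metric formula the essential input is closedness of $\c$: by Hahn--Banach separation, $z \in \c\cup\set{0}$ if and only if $\inn{z,\phi}\geq 0$ for every $\phi \in \c^{\ast}$. Hence $x \leq_{\c} \beta y$ holds iff $\beta\inn{y,\phi}\geq \inn{x,\phi}$ for all $\phi \in \c^{\ast}$, so $M(x/y;\c)=\sup\set{\inn{x,\phi}/\inn{y,\phi}:\inn{y,\phi}\neq 0}$, and symmetrically $m(x/y;\c)=\inf\set{\inn{x,\psi}/\inn{y,\psi}:\inn{y,\psi}\neq 0}$; dividing and rewriting the quotient of a supremum by an infimum as a supremum over pairs $(\phi,\psi)$ yields exactly the displayed identity (the vanishing equivalence reconciles the side conditions $\inn{y,\phi}\neq 0$ and $\inn{x,\psi}\neq 0$). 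This is the content of \cite[Lemma 1.4]{eveson_nussbaum_1995}, which is all the paper itself offers by way of proof; your cone-contraction machinery belongs downstream of this proposition, not in its proof.
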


Let $\S_{A}^{+}$ be a topological Markov shift over a countable alphabet (we will work in this generality so that the industrious reader can convince themselves that under the appropriate assumptions the results about factors can be carried over to countable state spaces). When $\Sigma_{A}^{+}$ is a shift space it is often nice to think of $C_{b}(\Sigma_{A})$ as a ``$\ell^{\infty}$'' direct sum of the spaces
\[ C([a]) :=\set{f \in C_{b}(\S_{A}^{+}):f(x)=0 \text{ for all }x \notin [a]} \] 
where $a$ is a symbol in the alphabet. Recall that given a collection of Banach spaces $\set{X_{i}}_{i =0}^{\infty}$ we can define
\[ \bigoplus_{i=0}^{
	\infty}X_{i}=\set{(a_{i})_{i =0}^{\infty}:\sup_{i \geq 0}\norm{a_{i}}_{X_{i}}<\infty} \]
and 
\[ \norm{(a_{i})_{i=0}^{\infty}}=\sup_{i \geq 0}\norm{a_{i}}_{X_{i}}. \]
This along with coordinate-wise addition and scalar multiplication is one of a number of possible ways to take the direct sum of Banach spaces. 
\begin{proposition}
	The function $\norm{(a_{i})_{i=0}^{\infty}}=\sup_{i \geq 0}\norm{a_{i}}_{X_{i}}$ defines a norm on $\bigoplus_{i=0}^{\infty}X_{i}$ which makes $\bigoplus_{i =0}^{\infty}X_{i}$ into a Banach space.
\end{proposition}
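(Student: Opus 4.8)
The plan is first to dispense with the norm axioms and then to concentrate on completeness, which is the only substantive point. That $\norm{\cdot}$ is well defined on $\bigoplus_{i=0}^{\infty}X_{i}$ is immediate from the definition of the space, since its elements are exactly the sequences for which $\sup_{i}\norm{a_{i}}_{X_{i}}$ is finite. Nonnegativity, together with the fact that $\norm{(a_{i})_{i}}=0$ forces each $a_{i}=0$, follows coordinatewise from the corresponding properties of the $\norm{\cdot}_{X_{i}}$; homogeneity is clear; and the triangle inequality follows by taking the supremum over $i$ in $\norm{a_{i}+b_{i}}_{X_{i}}\leq \norm{a_{i}}_{X_{i}}+\norm{b_{i}}_{X_{i}}\leq \norm{(a_{j})_{j}}+\norm{(b_{j})_{j}}$.

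For completeness, let $(a^{(n)})_{n=0}^{\infty}$ be a Cauchy sequence, writing $a^{(n)}=(a_{i}^{(n)})_{i=0}^{\infty}$. The first step is to produce a candidate limit coordinatewise: since $\norm{a_{i}^{(n)}-a_{i}^{(m)}}_{X_{i}}\leq \norm{a^{(n)}-a^{(m)}}$ for every fixed $i$, each coordinate sequence $(a_{i}^{(n)})_{n}$ is Cauchy in $X_{i}$, hence converges to some $b_{i}\in X_{i}$ by completeness of $X_{i}$. Set $b=(b_{i})_{i=0}^{\infty}$.

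The second step is to upgrade this coordinatewise convergence to convergence in the $\bigoplus$ norm, uniformly in $i$, and simultaneously to verify $b\in \bigoplus_{i=0}^{\infty}X_{i}$. Given $\e>0$, choose $N$ so that $\norm{a^{(n)}-a^{(m)}}<\e$ for all $n,m\geq N$; then for each fixed $i$ and all $n,m\geq N$ we have $\norm{a_{i}^{(n)}-a_{i}^{(m)}}_{X_{i}}<\e$. Holding $n\geq N$ and $i$ fixed and letting $m\to\infty$ (using continuity of $\norm{\cdot}_{X_{i}}$) gives $\norm{a_{i}^{(n)}-b_{i}}_{X_{i}}\leq \e$. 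Crucially this bound is independent of $i$, so taking the supremum over $i$ yields $\norm{a^{(n)}-b}\leq \e$ for all $n\geq N$. In particular $b=a^{(N)}-(a^{(N)}-b)$ satisfies $\sup_{i}\norm{b_{i}}_{X_{i}}\leq \norm{a^{(N)}}+\e<\infty$, so $b\in \bigoplus_{i=0}^{\infty}X_{i}$, and the same estimate shows $a^{(n)}\to b$.

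The only delicate point, and the step I would flag as the heart of the argument, is the passage $m\to\infty$ inside the Cauchy estimate: the bound $\norm{a_{i}^{(n)}-a_{i}^{(m)}}_{X_{i}}<\e$ must be uniform in the coordinate $i$ so that, after taking the limit in $m$, it survives the supremum over $i$. This uniformity is precisely the feature that distinguishes the $\ell^{\infty}$-type sum and is what guarantees the limit lands back in the space rather than merely defining a sequence of bounded coordinates with unbounded supremum.
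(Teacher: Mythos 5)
Your proof is correct and complete; the paper in fact states this proposition without proof, treating it as a routine fact, and your argument is the standard completeness proof for $\ell^{\infty}$-type direct sums: extract coordinatewise limits $b_{i}$ from the Cauchy condition, then let $m\to\infty$ in the uniform Cauchy bound to get $\norm{a_{i}^{(n)}-b_{i}}_{X_{i}}\leq \e$ independently of $i$, which after taking the supremum simultaneously yields $a^{(n)}\to b$ in norm and $b\in\bigoplus_{i=0}^{\infty}X_{i}$. You have also correctly isolated the only point of substance, namely that the coordinatewise estimate is uniform in $i$ and therefore survives the supremum.
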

It is not hard to see that 
\[ C_{b}(\S_{A}^{+})\cong \bigoplus_{a\in \S}C([a]). \] 
Moreover given a subset of symbols $S$ we identify
\[ \bigoplus_{a \in S}C([a])= \set{f \in C_{b}(\S_{A}):f(x)=0 \text{ for all }x \notin \bigcup_{a \in S}[a]}. \]
We will also be concerned with cones which arise in the following way.

\begin{proposition}\label{directsumofcones}
	Suppose that $\set{X_{i}}_{i=0}^{\infty}$ is a set of Banach spaces and $\c_{i} \subseteq X_{i}$ are closed cones. Then 
	\[ \bigoplus_{i=0}^{\infty}\c_{i}:=\set{(x_{i})_{i=0}^{\infty}\in \bigoplus_{i=0}^{\infty}X_{i}:x_{i}\in \c_{i} \cup \set{0}}\setminus \set{(0)_{i=0}^{\infty}} \]
	is a closed cone in $\bigoplus_{i=0}^{\infty}X_{i}$.
\end{proposition}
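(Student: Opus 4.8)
The plan is to check the three defining properties of a cone together with the closedness condition, reducing each to a coordinate-wise statement. The key preliminary observation is that readjoining the zero sequence to $\bigoplus_{i}\c_i$ exactly refills the single point that was removed, so that
\[ \left(\bigoplus_{i=0}^\infty \c_i\right)\cup\set{(0)_{i=0}^\infty} = \set{(x_i)_{i=0}^\infty \in \bigoplus_{i=0}^\infty X_i : x_i \in \c_i \cup \set{0} \text{ for all } i}. \]
I will write $\c$ for $\bigoplus_i \c_i$ and use this identity freely, since it converts every clause into an assertion about individual coordinates. I will also record at the outset that $0 \notin \c_i$ for each $i$: otherwise $0 \in \c_i \cap (-\c_i)$, contradicting property (3) for $\c_i$.

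First I would verify scale invariance. For $a>0$ and $(x_i)\in\c$, scale invariance of each $\c_i$ gives $ax_i \in \c_i\cup\set{0}$, and $(ax_i)$ is nonzero since $a>0$; applying the same argument to $a^{-1}$ yields $a\c = \c$. Convexity is checked coordinatewise: for $(x_i),(y_i)\in\c$ and $t\in(0,1)$, each coordinate $tx_i+(1-t)y_i$ lies in $\c_i\cup\set{0}$, using convexity of $\c_i$ when both summands are nonzero and scale invariance to absorb the cases where one summand vanishes, and the combination is nonzero because at some $i_0$ the coordinate $x_{i_0}$ is a nonzero element of $\c_{i_0}$, whence $tx_{i_0}+(1-t)y_{i_0}\in\c_{i_0}$ remains nonzero. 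For property (3), if $(x_i)\in\c\cap(-\c)$ then choosing $i_0$ with $x_{i_0}\neq 0$ forces both $x_{i_0}\in\c_{i_0}$ and $-x_{i_0}\in\c_{i_0}$, i.e. $x_{i_0}\in\c_{i_0}\cap(-\c_{i_0})=\emptyset$, a contradiction; hence $\c\cap(-\c)=\emptyset$.

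The only step that uses both the closedness hypothesis on the $\c_i$ and the structure of the direct-sum norm is the closedness of $\c\cup\set{0}$. Here I would exploit that the direct-sum norm dominates each coordinate norm, $\norm{x_i}_{X_i}\leq \norm{(x_j)_{j}}$, so that norm convergence in $\bigoplus_i X_i$ forces coordinatewise convergence in each $X_i$. Given a sequence in $\c\cup\set{0}$ converging to some $(x_i)$, each coordinate sequence then converges to $x_i$ inside the closed set $\c_i\cup\set{0}$, whence $x_i\in\c_i\cup\set{0}$ for every $i$; by the displayed identity this says exactly that the limit lies in $\c\cup\set{0}$.

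I do not expect a serious obstacle: the proposition is essentially a bookkeeping statement, and each clause collapses to the corresponding property of a single $\c_i$, with the closedness step being the one carrying genuine content. The two points that merit explicit treatment rather than being left implicit are (i) that nonzeroness of a sequence is preserved under positive scaling and convex combination, which is precisely where $0\notin\c_i$ is invoked, and (ii) that the relevant closed set at the coordinate level is $\c_i\cup\set{0}$ rather than $\c_i$ itself, in keeping with the convention that a closed cone is one whose union with $\set{0}$ is closed.
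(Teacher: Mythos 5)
Your proof is correct and takes essentially the same route as the paper: the cone axioms are checked coordinatewise (the paper dismisses this part as clear, whereas you supply the details, including the useful observation that $0\notin\c_{i}$ is what keeps scalings and convex combinations nonzero), and closedness follows from the fact that the direct-sum norm dominates each coordinate norm together with closedness of each $\c_{i}\cup\set{0}$. The only cosmetic difference is that the paper tests closedness with a net while you use a sequence, which is equivalent here since $\bigoplus_{i}X_{i}$ is a normed, hence metric, space.
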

\begin{proof}
	As addition and scalar multiplication are coordinate wise it is clear that $\bigoplus_{i=0}^{\infty}\c_{i}$ is a cone. To see that it is closed suppose that $x$ is in the closure of $\bigoplus_{i=0}^{\infty}\c_{i}$ and take a net $x^{\alpha}$ converging to $x$. By the definition of the norm we have that for any $i$ $x^{\alpha}_{i}$ converges to $x_{i}$, as $\c_{i}\cup \set{0}$ is closed this implies that $x_{i} \in \c_{i}\cup\set{0}$. Therefore $x \in \bigoplus_{i=0}^{\infty}\c_{i} \cup \set{0}$.
\end{proof}

\begin{lemma}
	Suppose that $\set{X_{i}}_{i=0}^{\infty}$ is a set of Banach spaces and $\c_{i} \subseteq X_{i}$ are closed cones. Then 
	\[ m\left(x/y;\bigoplus_{i} \c_{i}\right)=\inf_{i}m(x_{i}/y_{i};\c_{i}) \]
	and
	\[ M\left(x/y; \bigoplus_{i} \c_{i}\right)=\sup_{i}M(x_{i}/y_{i}; \c_{i}). \]
	Thus
	\[ \Theta_{\bigoplus_{i} \c_{i}}(x,y)=\log \left(\sup_{i,j}\frac{M(x_{i}/y_{i}; \c_{i})}{m(x_{j}/y_{j};\c_{j})}\right) \]
\end{lemma}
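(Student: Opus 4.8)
The plan is to exploit the fact that the cone $\bigoplus_{i}\c_{i}$ is defined coordinatewise, so that the partial order it induces can be checked one coordinate at a time, and then to reduce the computation of $m$ and $M$ to an elementary manipulation of suprema of half-lines. First I would record the coordinatewise description of the order. Straight from the definition in Proposition \ref{directsumofcones}, a vector $z=(z_{i})$ lies in $\bigoplus_{i}\c_{i}\cup\set{0}$ if and only if $z_{i}\in\c_{i}\cup\set{0}$ for every $i$. Applying this to $z=x-\alpha y$ and to $z=\beta y-x$ yields, for every real $\alpha,\beta$,
\[ \alpha y \leq_{\bigoplus_{i}\c_{i}} x &\iff \alpha y_{i}\leq_{\c_{i}}x_{i}\ \text{for all }i, \\
x \leq_{\bigoplus_{i}\c_{i}}\beta y &\iff x_{i}\leq_{\c_{i}}\beta y_{i}\ \text{for all }i. \]

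Next I would compute $m$ and $M$ from these equivalences. Since $y\in\bigoplus_{i}\c_{i}$ forces $y_{i}\geq_{\c_{i}}0$, each coordinate set $\set{\alpha:\alpha y_{i}\leq_{\c_{i}}x_{i}}$ is a downward-closed half-line whose supremum is $m(x_{i}/y_{i};\c_{i})$ by definition, and each set $\set{\beta:x_{i}\leq_{\c_{i}}\beta y_{i}}$ is an upward-closed half-line whose infimum is $M(x_{i}/y_{i};\c_{i})$. Because $m(x/y;\bigoplus_{i}\c_{i})$ is the supremum of the \emph{intersection} over $i$ of the first family, and $M(x/y;\bigoplus_{i}\c_{i})$ the infimum of the intersection of the second, the result follows from the elementary fact that the supremum of an intersection of downward-closed half-lines equals the infimum of their individual suprema (and dually for upward-closed half-lines and infima). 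This gives $m(x/y;\bigoplus_{i}\c_{i})=\inf_{i}m(x_{i}/y_{i};\c_{i})$ and $M(x/y;\bigoplus_{i}\c_{i})=\sup_{i}M(x_{i}/y_{i};\c_{i})$. The formula for $\Theta$ then drops out by combining these two identities,
\[ \Theta_{\bigoplus_{i}\c_{i}}(x,y)=\log\frac{\sup_{i}M(x_{i}/y_{i};\c_{i})}{\inf_{j}m(x_{j}/y_{j};\c_{j})}=\log\left(\sup_{i,j}\frac{M(x_{i}/y_{i};\c_{i})}{m(x_{j}/y_{j};\c_{j})}\right), \]
where the last equality uses positivity of all the quantities involved, guaranteed by comparability of $x$ and $y$ in the big cone (so that $0<m\leq M<\infty$).

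The computation is essentially bookkeeping, so the genuine care lies in two places, and these are what I would flag as the main obstacle. The first is the treatment of coordinates with $y_{i}=0$: comparability of $x$ and $y$ in $\bigoplus_{i}\c_{i}$ forces $x_{i}\in\c_{i}\cup\set{0}$ and $-x_{i}\in\c_{i}\cup\set{0}$, so the axiom $\c_{i}\cap(-\c_{i})=\emptyset$ gives $x_{i}=0$ as well; such coordinates impose no constraint (the half-lines are all of $\R$) and, under the conventions $m(0/0)=+\infty$, $M(0/0)=-\infty$, leave the infimum and supremum untouched since $x,y\neq 0$ guarantees at least one nonzero coordinate. The second is the interchange of the supremum over $\alpha$ (or infimum over $\beta$) with the intersection over $i$, which must be justified uniformly even when there are infinitely many coordinates; this is precisely the half-line identity invoked above, and I would state and verify it as a short self-contained step rather than treat it as folklore. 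Note that closedness of the $\c_{i}$ is not needed for the values of the suprema, only the definitions of $m$ and $M$, which streamlines the argument.
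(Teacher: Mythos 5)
Your proposal is correct and takes essentially the same route as the paper: you describe the order in $\bigoplus_{i}\c_{i}$ coordinatewise, write the admissible set of $\alpha$ (resp.\ $\beta$) as an intersection over $i$ of downward- (resp.\ upward-) closed half-lines, and identify the supremum of the intersection with the infimum of the coordinatewise suprema, which is exactly the paper's computation $m\left(x/y;\bigoplus_{i}\c_{i}\right)=\sup\bigcap_{i}\set{\alpha:\alpha y_{i}\leq_{\c_{i}}x_{i}}=\inf_{i}m(x_{i}/y_{i};\c_{i})$. The only difference is that you spell out details the paper leaves implicit --- the coordinates with $y_{i}=0$ (correctly forced to have $x_{i}=0$ by $\c_{i}\cap(-\c_{i})=\emptyset$), the uniform half-line interchange over infinitely many coordinates, and the positivity $0<m\leq M<\infty$ needed to pass to the $\Theta$ formula --- all of which are sound.
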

\begin{proof}
	We will show this for $m$ the proof for $M$ is similar. Notice that 
	\begin{align*}
	m\left(x/y;\bigoplus_{i} \c_{i}\right)&=\sup\set{\alpha : a x_{i}\leq_{\c_{i}}y_{i} \text{ for all }i }\\
	&=\sup \bigcap_{i}\set{\alpha : \alpha x_{i}\leq_{\c_{i}} y_{i}} \\
	&= \sup \bigcap_{i}[0, m(x_{i}/y_{i};\c_{i})] \\
	&= \inf_{i} m(x_{i}/y_{i};\c_{i})
	\end{align*}
\end{proof}

For a metric space $(Z,d)$ define the cones
\[ C_{b}(Z)^{+}=\set{f\in C_{b}(Z):f \geq 0} \]
and
\[ \c(Z,d):=\set{f \in C_{b}(Z):f>0 \text{ and }f(x)\leq e^{d(x,y)}f(y) \text{ for all }x,y\in Z}. \]

\begin{lemma}
	Assume that $Z$ has no isolated points. For $f,g\in \c(Z,d)$ we have
	\[ m(g/f;\c(Z,d))=\inf_{x,y\in Z, x \neq y}\frac{e^{d(x,y)}g(y)-g(x)}{e^{d(x,y)}f(y)-f(x)} \]
	and
	\[ M(g/f;\c(Z,d))=\sup_{x,y\in Z, x \neq y}\frac{e^{d(x,y)}g(y)-g(x)}{e^{d(x,y)}f(y)-f(x)}. \]
\end{lemma}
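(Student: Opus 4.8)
The plan is to reduce both identities to a single structural fact: for a bounded continuous $h$ on $Z$, membership $h \in \c(Z,d) \cup \set{0}$ is \emph{equivalent} to the one-sided Lipschitz inequality
\[ h(x) \leq e^{d(x,y)}h(y) \quad \text{for all } x,y \in Z. \]
Granting this, I would unwind the definition of $m$: since $m(g/f;\c(Z,d)) = \sup\set{\alpha : g - \alpha f \in \c(Z,d)\cup\set{0}}$, the reduction lets me replace the cone condition on $h = g-\alpha f$ by the inequality above, which I then rearrange to isolate $\alpha$. The formula for $M$ is read off from the parallel description $M(g/f;\c(Z,d)) = \inf\set{\beta : \beta f - g \in \c(Z,d)\cup\set{0}}$.

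The first step, and the genuine content, is the reduction. One inclusion is immediate from the definition of $\c(Z,d)$, together with the fact that the zero function satisfies the inequality trivially. For the converse I would argue that the inequality forces non-negativity: if $h(y_{0}) < 0$ for some $y_{0}$, then $h(x) \leq e^{d(x,y_{0})} h(y_{0}) \leq h(y_{0})$ for every $x$ (using $e^{d}\geq 1$ and $h(y_{0})<0$), so $h<0$ everywhere; applying the inequality with the roles reversed then gives $h(y_{0}) \leq e^{d(x,y_{0})}h(x) \leq h(x)$, and the two bounds force $h$ to be constant and negative, which is incompatible with $h(x)\leq e^{d(x,y)}h(y)$ as soon as there exist two points with $d(x,y) > 0$. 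Because $Z$ has no isolated points it is perfect, hence infinite, and in particular contains such a pair, so $h \geq 0$. A second application of the inequality shows that if $h$ vanishes at one point it vanishes everywhere (from $h(x) \leq e^{d(x,y_{0})}h(y_{0}) = 0$), so a non-negative solution is either identically $0$ or strictly positive. This is exactly the dichotomy $h \in \c(Z,d) \cup \set{0}$.

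With the reduction in hand the rest is algebra. Writing $A(x,y) = e^{d(x,y)}f(y) - f(x)$ and $B(x,y) = e^{d(x,y)}g(y)-g(x)$, both non-negative since $f,g \in \c(Z,d)$, the condition that $g - \alpha f$ satisfy the one-sided Lipschitz inequality rearranges to $\alpha A(x,y) \leq B(x,y)$ for all $x,y$. For pairs with $A(x,y) > 0$ this reads $\alpha \leq B(x,y)/A(x,y)$, while pairs with $A(x,y) = 0$ impose no constraint (as $B \geq 0$); taking the supremum over admissible $\alpha$ yields $m(g/f;\c(Z,d)) = \inf_{A(x,y)>0} B(x,y)/A(x,y)$, which is the claimed infimum over $x \neq y$ once one notes that the remaining pairs, where the denominator vanishes, contribute either $+\infty$ or the undefined ratio and so do not affect the infimum. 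The formula for $M$ is entirely parallel: the same rearrangement gives $B(x,y) \leq \beta A(x,y)$, and reading off $\beta$ produces the supremum; here a pair with $A = 0$ but $B > 0$ correctly forces $M = +\infty$, matching the convention that such a pair makes the supremum infinite and signalling that $f$ and $g$ are then not comparable. I expect the one-sided Lipschitz reduction of the first step to be the only real obstacle; everything after it is bookkeeping, the only subtlety being the careful treatment of the boundary pairs where $A(x,y) = 0$.
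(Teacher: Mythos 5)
Your proof is correct, and it reaches the two formulas by a genuinely different mechanism than the paper's. The paper, adapting Liverani's Lemma 2.2, unpacks $\alpha f \leq_{\c(Z,d)} g$ into \emph{two} families of constraints --- the pointwise ones $\alpha \leq g(x)/f(x)$ and the cross-ratio ones $\alpha \leq B(x,y)/A(x,y)$, where $A(x,y)=e^{d(x,y)}f(y)-f(x)$ and $B(x,y)=e^{d(x,y)}g(y)-g(x)$ --- so that $m(g/f;\c(Z,d))$ is a priori the minimum of two infima; the entire content of its proof is then the claim $\inf_{x\neq y}B(x,y)/A(x,y)\leq \inf_{z}g(z)/f(z)$, established by choosing for each $z$ a comparison point $x\neq z$ with $g(x)f(z)\leq f(x)g(z)$ and estimating algebraically, with a sequence of points converging to $z$ invoked precisely when $g/f$ has a unique minimum at $z$; that limiting step is where the no-isolated-points hypothesis enters. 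You instead show the pointwise family is outright redundant: your dichotomy argument (a negative value of $h=g-\alpha f$ propagates to make $h$ a negative constant, contradicting the one-sided inequality at any pair of distinct points, while a zero of a non-negative solution forces $h\equiv 0$) proves that the cross-ratio inequalities alone characterize membership in $\c(Z,d)\cup\set{0}$, after which both formulas are bookkeeping. This buys you three things: the argument is softer (no limiting sequence, no clever comparison point); it uses the standing hypothesis only through the existence of two distinct points, so it in fact proves the lemma under the weaker assumption $\abs{Z}\geq 2$ rather than full absence of isolated points; and it treats explicitly the degenerate ordered pairs with $A(x,y)=0$, which the paper's manipulations silently assume away when dividing (harmless, one can check, since both ordered versions of a pair cannot be simultaneously degenerate, as $A(x,y)=A(y,x)=0$ would force $d(x,y)=0$). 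What the paper's computation buys in exchange is the explicit quantitative domination $\inf_{x\neq y}B/A\leq g(z)/f(z)$ at each individual $z$, in the style of the decay-of-correlations estimates it is modeled on; but for the statement as given your argument is complete and, if anything, cleaner.
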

\begin{proof}
	This is a mild generalization of the proof of lemma 2.2 in \cite{LiveraniDOC} we provide the details for the reader's convenience. We will compute $m(f/g;\c(Z,d))$, $M(f/g:\c(Z,d))$ is analogous. Notice $\alpha f \leq _{\c(Z,d)}g$ if and only if
	\[ \alpha \leq \frac{g(x)}{f(x)} \forall x \in X \text{ and }\alpha \leq \frac{e^{d(x,y)}g(y)-g(x)}{e^{d(x,y)}f(y)-f(x)} \forall x \neq y.  \]
	Thus 
	\begin{align*}
	m(g/f;\c(Z,d))&= \sup \set{\alpha: \alpha f \leq_{\c(Z,d)}g}\\
	&=\min \set{\inf_{x \in Z}\frac{g(x)}{f(x)}, \inf_{x,y \in Z,x \neq y} \frac{e^{d(x,y)}g(y)-g(x)}{e^{d(x,y)}f(y)-f(x)} }.
	\end{align*}
	So the claim is that 
	\[  \inf_{x,y \in Z,x \neq y} \frac{e^{d(x,y)}g(y)-g(x)}{e^{d(x,y)}f(y)-f(x)}\leq \inf_{x \in Z}\frac{g(x)}{f(x)}.\]
	Let $z \in Z$ and take $x\neq z$ such that $\frac{g(x)f(z)}{f(x)g(z)}\leq 1$. Note this can always be done unless $\frac{g}{f}$ has a unique minimum at $z$. Then
	\begin{align*}
	\inf_{x,y \in Z,x \neq y} \frac{e^{d(x,y)}g(y)-g(x)}{e^{d(x,y)}f(y)-f(x)} &\leq \frac{e^{d(x,z)}g(x)-g(z)}{e^{d(x,z)}f(x)-f(z)}\\
	&= \frac{e^{d(x,z)}\frac{g(x)}{f(x)}f(x)-\frac{g(z)}{f(z)}f(z)}{e^{d(x,z)}f(x)-f(z)}\\
	&= \frac{g(z)}{f(z)}\frac{e^{d(x,z)}\frac{g(x)f(z)}{f(x)g(z)}f(x)-f(z)}{e^{d(x,z)}f(x)-f(z)}\\
	&\leq \frac{g(z)}{f(z)} \text{ as }\frac{g(x)f(z)}{f(x)g(z)}\leq 1.
	\end{align*}
	If $z$ is the unique minimum of $\frac{g}{f}$ then take a sequence which is not eventually $z$ converging to $z$ and the same inequality follows (hence the assumption that $Z$ contains no isolated points).
\end{proof}

We are concerned with the case that $Z$ is a cylinder set in $\S_{A}^{+}$ and $d$ some metric. Given a metric and a number $\sigma>0$ we can define a new metric by the function $(x,y)\mapsto \sigma d(x,y)$, the new metric is called $\sigma d$. The next corollary will be one of our fundamental tools, it has been noticed previously, see for example \cite{MR2083432} and \cite{LiveraniDOC}, although we state it in a somewhat different way.

\begin{corollary}\label{ThetaBoundRegularityCones}
	Suppose that $\set{(Z_{i},d_{i})}_{i}$ is a countable collection of metric spaces without isolated points. If $f,g \in \bigoplus_{i}\c(Z_{i},\sigma d_{i})$ then 
	\[ \Theta_{\bigoplus_{i}\c(Z_{i},d_{i})}(f,g)\leq 2 \log \frac{1+\sigma}{1-\sigma}+\Theta_{\bigoplus_{i}C(Z_{i})^{+}}(f,g) \]
\end{corollary}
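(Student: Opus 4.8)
The plan is to reduce to a single-component estimate and then finish with a short one-variable analysis. Writing $f=(f_{i})$ and $g=(g_{i})$ with each $f_{i},g_{i}\in\c(Z_{i},\sigma d_{i})$, the lemma computing $m$ and $M$ for a direct sum of cones expresses both $\Theta_{\bigoplus_{i}\c(Z_{i},d_{i})}(f,g)$ and $\Theta_{\bigoplus_{i}C(Z_{i})^{+}}(f,g)$ as the logarithm of a supremum of ratios $M(\cdot)/m(\cdot)$ of the component data. Hence it suffices to prove, for a single metric space $(Z,d)$ without isolated points and $f,g\in\c(Z,\sigma d)$, the two component-wise bounds
\[
M(g/f;\c(Z,d))\leq \frac{1+\sigma}{1-\sigma}\,M(g/f;C(Z)^{+}),\qquad m(g/f;\c(Z,d))\geq \frac{1-\sigma}{1+\sigma}\,m(g/f;C(Z)^{+}).
\]
Feeding these into the two suprema multiplies the argument of the logarithm by $\left(\frac{1+\sigma}{1-\sigma}\right)^{2}$, and taking logarithms gives the claimed additive constant $2\log\frac{1+\sigma}{1-\sigma}$. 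Finally, the general identity $m(u/v;\c)=1/M(v/u;\c)$ shows that the lower bound for $m$ follows from the upper bound for $M$ applied to the reversed pair, so only the $M$-estimate needs to be established.

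For the $M$-estimate I would use the explicit formula from the lemma on $\c(Z,d)$ together with the sup/inf formula on $C(Z)^{+}$. Abbreviate $M_{0}=M(g/f;C(Z)^{+})=\sup_{z}g(z)/f(z)$ and set $c_{z}=g(z)/f(z)\leq M_{0}$. Fix $x\neq y$, put $t=d(x,y)>0$, and note that $\sigma$-regularity of $f$ gives $e^{t}f(y)-f(x)>0$, so the quotient defining $M(g/f;\c(Z,d))$ is positive. The identity
\[
\frac{e^{t}g(y)-g(x)}{e^{t}f(y)-f(x)}=c_{y}+(c_{y}-c_{x})\,\frac{f(x)}{e^{t}f(y)-f(x)}
\]
reduces the task to bounding the correction term. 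Using $f(x)\leq e^{\sigma t}f(y)$ I would bound $\frac{f(x)}{e^{t}f(y)-f(x)}\leq \frac{1}{e^{(1-\sigma)t}-1}$, and using $\sigma$-regularity of both $f$ and $g$ I would bound $c_{x}/c_{y}\geq e^{-2\sigma t}$, whence $c_{y}-c_{x}\leq M_{0}(1-e^{-2\sigma t})$; the case $c_{y}<c_{x}$ only makes the quotient smaller than $M_{0}$ and is harmless. Combining these yields $M(g/f;\c(Z,d))\leq M_{0}\bigl(1+\sup_{t>0}\tfrac{1-e^{-2\sigma t}}{e^{(1-\sigma)t}-1}\bigr)$.

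It then remains to verify the one-variable inequality $\frac{1-e^{-2\sigma t}}{e^{(1-\sigma)t}-1}\leq \frac{2\sigma}{1-\sigma}$ for all $t>0$, which is exactly where the precise constant is produced. I would clear denominators and study $F(t)=2\sigma\bigl(e^{(1-\sigma)t}-1\bigr)-(1-\sigma)\bigl(1-e^{-2\sigma t}\bigr)$, observing that $F(0)=0$ and $F'(t)=2\sigma(1-\sigma)\bigl(e^{(1-\sigma)t}-e^{-2\sigma t}\bigr)>0$ for $t>0$, so $F\geq 0$ throughout; since $1+\frac{2\sigma}{1-\sigma}=\frac{1+\sigma}{1-\sigma}$, this closes the $M$-estimate and hence the corollary. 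I expect the main obstacle to be precisely this last step: the supremum over $t$ is attained only in the limit $t\to 0^{+}$, so checking endpoints does not suffice and one genuinely needs the monotonicity of $F$; arranging the two regularity bounds so that the constant comes out as exactly $\frac{1+\sigma}{1-\sigma}$, rather than something larger, is the delicate part of the argument.
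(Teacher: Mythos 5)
Your proof is correct, and it reaches the paper's constant by a genuinely different decomposition of the key quotient. Both arguments share the same skeleton: reduce via the direct-sum formula $\Theta_{\bigoplus_{i}\c_{i}}(f,g)=\log\sup_{i,j}M(g_{i}/f_{i};\c_{i})/m(g_{j}/f_{j};\c_{j})$ and invoke the explicit two-point formulas for $m$ and $M$ on $\c(Z,d)$ (valid here since $\c(Z,\sigma d)\subseteq\c(Z,d)$). From there they diverge. The paper bounds the ratio $M_{i}/m_{j}$ jointly: it factors each two-point quotient multiplicatively as $\frac{e^{t}-g(x)/g(y)}{e^{t}-f(x)/f(y)}\cdot\frac{g(y)}{f(y)}$, bounds the first factor by $\frac{1-e^{-(1+\sigma)t}}{1-e^{-(1-\sigma)t}}\leq\frac{1+\sigma}{1-\sigma}$ using the inequality $\frac{1-e^{-a}}{1-e^{-b}}\leq\frac{a}{b}$ for $a\geq b>0$, and only at the end absorbs the leftover $\frac{g_{i}(y)f_{j}(z)}{f_{i}(y)g_{j}(z)}$ into $\Theta_{\bigoplus_{i}C(Z_{i})^{+}}(f,g)$. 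You instead isolate the clean componentwise statements $M(g/f;\c(Z,d))\leq\frac{1+\sigma}{1-\sigma}M(g/f;C(Z)^{+})$ and its dual via $m(g/f;\c)=1/M(f/g;\c)$, proving the $M$-estimate by an additive decomposition around $c_{y}$ rather than a multiplicative one; your identity and the two regularity bounds $\frac{f(x)}{e^{t}f(y)-f(x)}\leq\frac{1}{e^{(1-\sigma)t}-1}$ and $c_{x}/c_{y}\geq e^{-2\sigma t}$ check out, and your one-variable inequality $\frac{1-e^{-2\sigma t}}{e^{(1-\sigma)t}-1}\leq\frac{2\sigma}{1-\sigma}$ (established by the monotonicity of $F$, since the supremum is only attained as $t\to 0^{+}$) plays exactly the role of the paper's $\frac{1-e^{-a}}{1-e^{-b}}\leq\frac{a}{b}$, whose extremum is likewise a limit at zero. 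What your route buys is modularity: the single-cone comparison between the $\c(Z,d)$-data and the $C(Z)^{+}$-data is a reusable statement, and the sign case $c_{y}<c_{x}$ is handled explicitly, whereas the paper's joint computation never needs to split cases; the paper's version in exchange is shorter and treats the $M$- and $m$-factors symmetrically in one chain of estimates. Both deliver exactly the additive constant $2\log\frac{1+\sigma}{1-\sigma}$, and your argument inherits only the same implicit conventions as the paper's (e.g., components where comparability degenerates are ignored in the supremum).
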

\begin{proof}
	Notice that for and $i,j$ we have that 
	\begin{align*}
	&\frac{M(f_{i}/g_{i};\c(Z_{i},d))}{m(f_{j}/g_{j};\c(Z_{j},d))}=\\
	&\sup\set{\frac{e^{d(x,y)}g(y)-g(x)}{e^{d(x,y)}f(y)-f(x)} \cdot \frac{e^{d(w,z)}f(z)-f(w)}{e^{d(w,z)}g(z)-g(w)}:x,y \in Z_{i}, w,z \in Z_{j}, x \neq y, w \neq z}
	\end{align*}
	and
	\begin{align*}
	&\frac{e^{d(x,y)}g_{i}(y)-g_{i}(x)}{e^{d(x,y)}f_{i}(y)-f_{i}(x)} \cdot \frac{e^{d(w,z)}f_{j}(z)-f_{j}(w)}{e^{d(w,z)}g_{j}(z)-g_{j}(w)}\\
	&= \frac{e^{d(x,y)}-\frac{g_{i}(x)}{g_{i}(y)}}{e^{d(x,y)}-\frac{f_{i}(x)}{f_{i}(y)}} \cdot \frac{e^{d(w,z)}-\frac{f_{j}(w)}{f_{j}(z)}}{e^{d(w,z)}-\frac{g_{j}(w)}{g_{j}(z)}}\cdot \frac{g_{i}(y)f_{j}(z)}{f_{i}(y)g_{j}(z)}\\
	&\leq \frac{e^{d(x,y)}-e^{-\sigma d(x,y)}}{e^{d(x,y)}-e^{\sigma d(x,y)}} \cdot \frac{e^{d(w,z)}-e^{-\sigma d(w,z)}}{e^{d(w,z)}-e^{\sigma d(w,z)}}\cdot \frac{g_{i}(y)f_{j}(z)}{f_{i}(y)g_{j}(z)}\\
	&=\frac{e^{d(x,y)}(1-e^{-(1+\sigma) d(x,y)})}{e^{d(x,y)}(1-e^{-(1-\sigma) d(x,y)})} \cdot \frac{e^{d(w,z)}(1-e^{-(1+\sigma) d(w,z)})}{e^{d(w,z)}(1-e^{-(1-\sigma) d(w,z)})}\cdot \frac{g_{i}(y)f_{j}(z)}{f_{i}(y)g_{j}(z)}\\
	&=\frac{1-e^{-(1+\sigma) d(x,y)}}{1-e^{-(1-\sigma) d(x,y)}} \cdot \frac{1-e^{-(1+\sigma) d(w,z)}}{1-e^{-(1-\sigma) d(w,z)}}\cdot \frac{g_{i}(y)f_{j}(z)}{f_{i}(y)g_{j}(z)}\\
	&\leq \frac{(1+\sigma) d(x,y)}{(1-\sigma) d(x,y)} \cdot \frac{(1+\sigma) d(w,z)}{(1-\sigma) d(w,z)}\cdot \frac{g_{i}(y)f_{j}(z)}{f_{i}(y)g_{j}(z)}\\
	&=  \frac{(1+\sigma)}{(1-\sigma)} \cdot \frac{(1+\sigma)}{(1-\sigma)}\cdot \frac{g_{i}(y)f_{j}(z)}{f_{i}(y)g_{j}(z)}
	\end{align*}
	where in line 6 we have used that for $x \geq y >0$
	\[ \frac{1-e^{-x}}{1-e^{-y}}\leq \frac{x}{y}. \]
\end{proof}

\bibliography{FactorsBib}{}

\begin{thebibliography}{10}

\bibitem{BirkhoffHilbertMetric}
G.~Birkhoff.
\newblock Extensions of {J}entzsch's theorem.
\newblock {\em Trans. Amer. Math. Soc.}, 85:219--227, 1957.

\bibitem{bowen1975equilibrium}
R.~Bowen.
\newblock {\em Equilibrium states and the ergodic theory of {A}nosov
  diffeomorphisms}, volume 470 of {\em Lecture Notes in Mathematics}.
\newblock Springer-Verlag, Berlin, revised edition, 2008.
\newblock With a preface by David Ruelle, Edited by Jean-Ren\'e Chazottes.

\bibitem{chazottes2003projection}
J.-R. Chazottes and E.~Ugalde.
\newblock Projection of {M}arkov measures may be {G}ibbsian.
\newblock {\em J. Statist. Phys.}, 111(5-6):1245--1272, 2003.

\bibitem{chazottes2009preservation}
J.-R. Chazottes and E.~Ugalde.
\newblock On the preservation of {G}ibbsianness under symbol amalgamation.
\newblock In {\em Entropy of hidden {M}arkov processes and connections to
  dynamical systems}, volume 385 of {\em London Math. Soc. Lecture Note Ser.},
  pages 72--97. Cambridge Univ. Press, Cambridge, 2011.

\bibitem{eveson_nussbaum_1995}
S.~P. Eveson and R.~D. Nussbaum.
\newblock Applications of the {B}irkhoff-{H}opf theorem to the spectral theory
  of positive linear operators.
\newblock {\em Math. Proc. Cambridge Philos. Soc.}, 117(3):491--512, 1995.

\bibitem{MR1297895}
S.~P. Eveson and R.~D. Nussbaum.
\newblock An elementary proof of the {B}irkhoff-{H}opf theorem.
\newblock {\em Math. Proc. Cambridge Philos. Soc.}, 117(1):31--55, 1995.

\bibitem{johansson_öberg_pollicott_2017}
A.~Johansson, A.~\"Oberg, and M.~Pollicott.
\newblock Phase transitions in long-range ising models and an optimal condition
  for factors of $g$ -measures.
\newblock {\em Ergodic Theory and Dynamical Systems}, page 1–14, 2017.

\bibitem{kempton2011factors}
T.~M.~W. Kempton.
\newblock Factors of {G}ibbs measures for subshifts of finite type.
\newblock {\em Bull. Lond. Math. Soc.}, 43(4):751--764, 2011.

\bibitem{kondah1997vitesse}
A.~Kondah, V.~Maume, and B.~Schmitt.
\newblock Vitesse de convergence vers l'\'etat d'\'equilibre pour des
  dynamiques markoviennes non h\"old\'eriennes.
\newblock {\em Ann. Inst. H. Poincar\'e Probab. Statist.}, 33(6):675--695,
  1997.

\bibitem{LiveraniDOC}
C.~Liverani.
\newblock Decay of correlations.
\newblock {\em Ann. of Math. (2)}, 142(2):239--301, 1995.

\bibitem{MR2083432}
F.~Naud.
\newblock Birkhoff cones, symbolic dynamics and spectrum of transfer operators.
\newblock {\em Discrete Contin. Dyn. Syst.}, 11(2-3):581--598, 2004.

\bibitem{HolderGibbsStates}
M.~Piraino.
\newblock Projections of {G}ibbs states for {H}{\"o}lder potentials.
\newblock {\em J. Stat. Phys.}, 170(5):952--961, Mar 2018.

\bibitem{pollicott2011factors}
M.~Pollicott and T.~Kempton.
\newblock Factors of {G}ibbs measures for full shifts.
\newblock In {\em Entropy of hidden {M}arkov processes and connections to
  dynamical systems}, volume 385 of {\em London Math. Soc. Lecture Note Ser.},
  pages 246--257. Cambridge Univ. Press, Cambridge, 2011.

\bibitem{MR2853610}
E.~Verbitskiy.
\newblock On factors of {$g$}-measures.
\newblock {\em Indag. Math. (N.S.)}, 22(3-4):315--329, 2011.

\bibitem{MR1783787}
P.~Walters.
\newblock Convergence of the {R}uelle operator for a function satisfying
  {B}owen's condition.
\newblock {\em Trans. Amer. Math. Soc.}, 353(1):327--347, 2001.

\bibitem{Yoo2010}
J.~Yoo.
\newblock On factor maps that send {M}arkov measures to {G}ibbs measures.
\newblock {\em J. Stat. Phys.}, 141(6):1055--1070, 2010.

\end{thebibliography}
\bibliographystyle{abbrv}

\end{document}